\DeclareMathAlphabet{\mathcal}{OMS}{zplm}{m}{n}
\newcommand*\dotp{\mathpalette\dotp@{.5}}
\newcommand*\dotp@[2]{\mathbin{\vcenter{\hbox{\scalebox{#2}{$\m@th#1\bullet$}}}}}
\newcommand*\bigcdot{\mathpalette\bigcdot@{.5}}
\newcommand*\bigcdot@[2]{\mathbin{\vcenter{\hbox{\scalebox{#2}{$\m@th#1\bullet$}}}}}
\newtheorem{theorem}{Theorem}[subsection]
\newtheorem{corollary}[theorem]{Corollary}
\newtheorem{lemma}[theorem]{Lemma}  
\newtheorem{proposition}[theorem]{Proposition}
\theoremstyle{definition}
\newtheorem*{conjecture*}{Conjecture}
\newtheorem*{remark*}{Remark}
\newtheorem{example}[theorem]{Example}
\numberwithin{equation}{subsection}
\newcommand{\M}{\mathcal{M}}
\title{ Infinitesimal  Bloch Regulator}
 \subjclass[2010]{19E15, 14C25}
\begin{document}
\author{S\.{I}nan \"{U}nver}
\address{Ko\c{c} University, Mathematics Department. Rumelifeneri Yolu, 34450, Istanbul, Turkey}
\email{sunver@ku.edu.tr}
\maketitle
\noindent

\begin{abstract}
In this paper, we continue our project of defining and studying the  infinitesimal versions of the classical, real analytic, invariants of motives. Here, we construct an infinitesimal analog of Bloch's regulator. Let $X/k$ be a scheme of finite type over a field $k$ of characteristic 0. Suppose that  $\underline{X} \hookrightarrow X$  is a closed subscheme, smooth over $k,$ and defined by a square-zero sheaf of ideals, which is locally free on $\underline{X}.$ We define two regulators: $\rho_1,$ from  the infinitesimal part of the motivic cohomology ${\rm H}^2 _{\M}(X,\mathbb{Q}(2))$ of $X$ to ${\rm ker}({\rm H}^{0}(X,\Omega^{1} _{X}/d\mathcal{O}_{X}) \to {\rm H}^{0}(\underline{X},\Omega^{1} _{\underline{X}}/d\mathcal{O}_{\underline{X}});$ and $\rho_2,$ from ${\rm ker}(\rho_1)$ to ${\rm H}^1(X,D_{1}(\mathcal{O}_{X})),$ where $D_{1}(\mathcal{O}_X)$ is the Zariski sheaf associated to the first  Andr\'{e}-Quillen homology. The main tool is a generalization of our additive dilogarithm construction. Using Goodwillie's theorem, we deduce that $\rho_2$ is an isomorphism.  We also reinterpret the above results in terms of the infinitesimal Deligne-Vologodsky crystalline complex $\mathcal{D}_{X}^{\circ}(2),$  when $X$ is smooth over the dual numbers of $k.$
\end{abstract}

\section{Introduction}

Let   $X/\mathbb{C}$ be a smooth curve over the complex numbers. The regulator map from the motivic cohomology ${\rm H}^{2} _{M}(X,\mathbb{Q}(2))=K_{2}(X)_{\mathbb{Q}} ^{(2)}$ to analytic Deligne cohomology ${\rm H}^{2} _{D}(X_{an},\mathbb{Z}(2))\simeq{\rm H}^1(X_{an},\mathbb{C}^{\times})$ is fundamental both in the arithmetic \cite{ram} study of $X,$ when it is defined over a number field, and in the geometric study of $X$ \cite{gg}.   The construction associates to every pair $f,g$ of meromorphic functions on $X,$ a line bundle with connection on $X_{an},$  such that the monodromy at each point is given by the tame symbol of $f$ and $g$ at that point \cite{de}. Using the identification  ${\rm H}^1(X_{an} ',\mathbb{C}^{\times})={\rm Hom}({\rm H}_{1}(X_{an} ',\mathbb{Z}),\mathbb{C}^{\times}),$ where $X_{an} '$ is the open set where $f$ and $g$ are invertible, this line bundle with connection corresponds to the homomorphism that sends the closed path $\gamma$ to  

$$
\exp(\frac{1}{2\pi i}( \int_{\gamma} \log f \cdot   dlog (g)-\log g(p) \int_{\gamma} dlog (f))) 
$$
in $\mathbb{C}^{\times}$ \cite{ram}. Here $p$ is an arbitrary point on $X'_{an}$ and the construction is independent of the choice of $p.$ Since, by the Gersten resolution, $K_{2}(X)^{(2)}_{\mathbb{Q}}=\Gamma(X, K_{2}^{M}(\mathcal{O}_{X})_{\mathbb{Q}} ),$ the above construction gives the regulator from $K_{2}(X)_{\mathbb{Q}} ^{(2)}.$

The aim of the present paper is to give a precise  infinitesimal analog of this construction. In order to do this, we first need to define the correct infinitesimal verison of the motivic cohomology group and interpret it in terms of a Zariski sheaf. The global sections of the sheafification of $K_{2} ^{M}$  gives only a quotient of the correct cohomology group. Instead, we need need to consider the Bloch complex of weight two, one of whose cohomology groups is $K_{2} ^{M},$ but also has another non-trivial cohomology group. We define these motivic cohomology groups in weights 1 and 2 in \textsection 2. The basic set-up is a scheme $X/k,$ over a field $k$ of characteristic 0,  together with a square-zero sheaf of ideals such that the corresponding closed scheme $\underline{X}$ is smooth over $k.$ We justify this definition by relating it to the infinitesimal part of the $K$-theory of $X,$ when  $X$ is smooth over $k_2:=k[t]/(t^2).$ This is done by comparing this construction to the Deligne-Vologodsky complex in \textsection \ref{vol}.  

The infinitesimal complex we define is denoted by $\Gamma_{X} ^{\circ}(2).$ The corresponding hypercohomology group ${\rm H}^2(X,\Gamma_{X}^{\circ}(2)) $ is the analog of the group $K_{2}(X)_{\mathbb{Q}} ^{(2)}$ in the classical case. This very explicit and function theoretic complex allows us to construct the infinitesimal analog of the classical function theoretic approach above as follows.

Suppose that $A$ is a local  $k$-algebra together with a square-zero ideal $I$ such that $\underline{A}:=A/I$ is a smooth $k$-algebra. Then corresponding to each splitting $\tau$ of the surjection $A \to \underline{A}$ as $k$-algebras, we construct, in \textsection \ref{reg}, a regulator $\ell i_{2,\tau}$ from $B_{2}(A)$ to $D_{1}(A),$ the Andr\'{e}-Quillen homology of $A.$ We would like to think of the choice of a splitting as the analog of the  choice of a path in the classical analytic theory. We give two different but equivalent constructions, one  computational, the other one conceptual.

In order to globalize this construction, we need to compare  different choices of liftings.  We give an example that this comparison is not possible unless we impose the additional hypothesis that $I$ is a free $\underline{A}$-module. Before making this homotopy construction, we study the local structure of $\Gamma_{X} ^{\circ}(2)$ in detail in \textsection \ref{loc-bloch}. In \textsection \ref{homotopy section}, we make this construction for an arbitrary morphism of pairs of rings with a nilpotent ideal. This generality will allow us to deduce the functoriality of the constructions. Based on the above analogy, this section should be thought of as the study of what happens when one chooses a different path of integration. In \textsection \ref{explicit hom}, we give an explicit formula for this homotopy map. 

Letting $(\Omega ^{1} _{X}/d\mathcal{O}_{X})^{\circ}:={\rm ker }(\Omega ^{1} _{X}/d\mathcal{O}_{X}\to \Omega ^{1} _{\underline{X}}/d\mathcal{O}_{\underline{X}})$ denote the infinitesimal part of $\Omega ^{1} _{X}/d\mathcal{O}_{X}$ and $F\Gamma_{X} ^{\circ}(2)$ an appropriate subcomplex of $\Gamma_{X} ^{\circ}(2)$ defined in \textsection 2, we have the following main theorem:


\begin{theorem}\label{thm1}
Suppose that  $X/k$ is a scheme of finite type over a field $k$ of characterictic 0. Suppose that $\underline{X} \hookrightarrow X$ is a closed subscheme of $X$ defined by a square-zero sheaf of ideals. Suppose further that  $\underline{X}$ is smooth over $k$ and that  the conormal sheaf of $\underline{X}$ in $X$ is locally free. Then we have the following regulators:

$$
\rho_1: {\rm H}^{2}(X,\Gamma_{X} ^{\circ}(2)) \to   {\rm H}^{0}(X,(\Omega^{1} _{X}/d\mathcal{O}_{X})^{\circ})
$$
and  
$$
\rho_2: {\rm H}^{2}(X,F\Gamma_{X} ^{\circ}(2)) \to {\rm H}^1(X,D_{1}(\mathcal{O}_X)),
$$
such that ${\rm ker}(\rho_1)={\rm H}^{2}(X,F\Gamma_{X} ^{\circ}(2))$ and $\rho_2$ is an isomorphism. These maps are functorial for arbitrary morphisms of $k$-schemes. 
\end{theorem}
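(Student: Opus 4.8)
The plan is to assemble both regulators out of the local regulator $\ell i_{2,\tau}\colon B_{2}(A)\to D_{1}(A)$ of \textsection\ref{reg}, to glue the local data into morphisms of complexes of Zariski sheaves by means of the homotopy construction of \textsection\ref{homotopy section}, and then to deduce that $\rho_2$ is an isomorphism from Goodwillie's theorem. Throughout I would work with the explicit local description of $\Gamma_{X}^{\circ}(2)$ from \textsection\ref{loc-bloch}, so that on an affine open $U=\mathrm{Spec}(A)$ the terms of the complex are expressed through $B_{2}(A)$ and $\ell i_{2,\tau}$ is available; both $\rho_1$ and $\rho_2$ will then be produced as morphisms in the derived category of Zariski sheaves, followed by passage to ${\rm H}^{2}$.

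For $\rho_1$ I would first isolate the leading term of $\ell i_{2,\tau}$, namely its composite with the natural symbol projection onto $(\Omega^{1}_{A}/dA)^{\circ}$, and verify that this composite is independent of the splitting $\tau$. Since no comparison datum between two liftings is then required, these local maps agree on overlaps and glue on the nose to a morphism of complexes $\Gamma_{X}^{\circ}(2)\to (\Omega^{1}_{X}/d\mathcal{O}_{X})^{\circ}[-2]$; passing to ${\rm H}^{2}$ yields $\rho_1$, whose target is ${\rm H}^{2}(X,(\Omega^{1}_{X}/d\mathcal{O}_{X})^{\circ}[-2])={\rm H}^{0}(X,(\Omega^{1}_{X}/d\mathcal{O}_{X})^{\circ})$. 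By its definition in \textsection 2 the subcomplex $F\Gamma_{X}^{\circ}(2)$ is the fibre of this morphism, so there is a short exact sequence $0\to F\Gamma_{X}^{\circ}(2)\to \Gamma_{X}^{\circ}(2)\to (\Omega^{1}_{X}/d\mathcal{O}_{X})^{\circ}[-2]\to 0$. The equality ${\rm ker}(\rho_1)={\rm H}^{2}(X,F\Gamma_{X}^{\circ}(2))$ then follows from the associated long exact sequence, the injectivity of ${\rm H}^{2}(X,F\Gamma_{X}^{\circ}(2))\to {\rm H}^{2}(X,\Gamma_{X}^{\circ}(2))$ being automatic since the preceding term is ${\rm H}^{1}(X,(\Omega^{1}_{X}/d\mathcal{O}_{X})^{\circ}[-2])={\rm H}^{-1}(X,(\Omega^{1}_{X}/d\mathcal{O}_{X})^{\circ})=0$.

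For $\rho_2$ I would use the full $D_{1}$-valued regulator $\ell i_{2,\tau}$ restricted to $F\Gamma_{X}^{\circ}(2)$. Here the dependence on $\tau$ survives, so the local maps must be glued using the homotopies of \textsection\ref{homotopy section}, whose explicit shape is recorded in \textsection\ref{explicit hom}; this is precisely the step that forces the conormal sheaf $I$ to be locally free, since, as the example of the introduction shows, two liftings can only be compared when $I$ is free. Checking that these homotopies are coherent on triple overlaps assembles the local regulators into a morphism $F\Gamma_{X}^{\circ}(2)\to D_{1}(\mathcal{O}_{X})[-1]$ in the derived category, and taking ${\rm H}^{2}$ gives $\rho_2\colon {\rm H}^{2}(X,F\Gamma_{X}^{\circ}(2))\to {\rm H}^{1}(X,D_{1}(\mathcal{O}_{X}))$. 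The functoriality of $\rho_1$ and $\rho_2$ for an arbitrary morphism of $k$-schemes is then inherited from the functoriality of the homotopy construction, which \textsection\ref{homotopy section} establishes for an arbitrary morphism of pairs of rings with nilpotent ideal exactly for this purpose.

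Finally, to prove that $\rho_2$ is an isomorphism I would show that the morphism of complexes of sheaves $F\Gamma_{X}^{\circ}(2)\to D_{1}(\mathcal{O}_{X})[-1]$ constructed above is a quasi-isomorphism; as $\rho_2$ is the map it induces on ${\rm H}^{2}$, this suffices. Concretely, one must check that the cohomology sheaves of $F\Gamma_{X}^{\circ}(2)$ are concentrated in degree one, where the morphism identifies them with $D_{1}(\mathcal{O}_{X})$. This is a local statement, and it is here that Goodwillie's theorem enters: $D_{1}(\mathcal{O}_{X})$ is the Andr\'{e}-Quillen homology sheaf, which over $\mathbb{Q}$ computes the relative (infinitesimal) weight-two part of the $K$-theory of the square-zero extension, and $F\Gamma_{X}^{\circ}(2)$ is designed to present the same relative group, so the two are matched under Goodwillie's isomorphism. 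I expect the main obstacle to be twofold: the coherence (cocycle) verification for the homotopies of \textsection\ref{explicit hom} on triple overlaps --- without which one has merely an incoherent family of local regulators rather than a morphism in the derived category, and which is exactly where the freeness of $I$ is indispensable --- and the sheaf-level identification of the weight-two relative group presented by $F\Gamma_{X}^{\circ}(2)$ with the Andr\'{e}-Quillen sheaf under Goodwillie's theorem, which is what promotes $\rho_2$ from a morphism to a quasi-isomorphism. By contrast, the construction of $\rho_1$ and the computation of its kernel should be essentially formal once the $\tau$-independence of the symbol component has been established.
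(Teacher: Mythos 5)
Your proposal matches the paper's proof in all essentials: $\rho_1$ arises from the splitting-independent symbol projection $\Gamma_X^{\circ}(2)\to(\Omega^{1}_{X}/d\mathcal{O}_{X})^{\circ}[-2]$ (with $\ker(\rho_1)$ computed from the resulting short exact sequence of complexes), $\rho_2$ from gluing the local regulators $\ell i_{2,\tau_i}$ by means of the homotopies $h(\tau_i,\tau_j)$ in a \v{C}ech computation, and the isomorphism from the quasi-isomorphism $\ker(\delta^{\circ})[-1]\simeq F\Gamma_{X}^{\circ}(2)$ together with the local, splitting-independent identification $\ker(\delta^{\circ})\cong D_{1}(\mathcal{O}_{X})$. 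The one local input you defer to ``Goodwillie's theorem'' is supplied in the paper by Corollary \ref{localiso}, i.e.\ the computation that the composite $HC_{2}^{\circ}(A)^{(1)}\to B_{2}^{\circ}(A)\to D_{1}(A)$ is multiplication by $3$ on $S^{3}(I)$, combined with the surjectivity of $HC_{2}^{\circ}(A)^{(1)}$ onto $\ker(\delta^{\circ})$ from \cite{unv1}.
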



This can be thought of as the infinitesimal version of the injectivity conjecture for the Bloch regulator \cite[Conjecture 1.1]{gg}. Here, we should emphasize that the construction of $\rho_1$ is immediate, the content of the theorem is in the construction of $\rho_2$ and proving that it is an isomorphism. We would like to emphasize that the map above is completely explicit just as in the classical case. 

We leave the question of finding the infinitesimal version of the above analytic bundle with connection to future work.  This requires finding the right topology to define this object and  is not directly related to the contents of this paper.  Unlike the classical case above, in the infinitesimal case, we do not need to restrict ourselves to the case of curves. A heuristic argument,  based on the conjectural Bloch-Beilinson filtration, on  why we can get away without this restriction is as follows.  In the classical case, for curves, the first graded quotient ${\rm Ext}^{0} _{\mathcal{M}}(\mathbb{Q}(0), {\rm H}_{\mathcal{M}}^2(X/\mathbb{Q},\mathbb{Q})(2))$ is 0, whereas this may not be true for higher dimensions. On the other hand, in the infinitesimal setting, this quotient is always 0 regardless of the dimension. 

Finally, we would like to remark that this paper is part of a project of defining and studying the  infinitesimal versions of classical regulator constructions, which was started in \cite{unv2}.
 
{\bf Notation.} Unless stated otherwise, all the schemes as well as  the K\"{a}hler differentials, crystalline cohomology, cyclic homology, Hochschild homology and Andr\'{e}-Quillen homology are relative to $\mathbb{Q}.$ We will always use motivic cohomology with $\mathbb{Q}$-coefficients. Therefore, we always tensor all the groups in a Bloch complex with $\mathbb{Q}$ even though our notation might not reflect this. For example, $\Lambda^2 A^{\times}:=(\Lambda^2 _{\mathbb{Z}} A^{\times})_{\mathbb{Q}}.$   
 For an $A$-module $I,$  $S^{\bigcdot} _{A}  I$ denotes the symmetric algebra of $M$ over $A.$ We remove the subscript $A$ in this notation, if it is fixed in the context.    For a ring $A,$ $A^{\flat}$ denotes the set of all units $a$ in $A$ such that $1-a$ is also a unit.  For a functor $F$ from the category of  pairs $(R,I)$ of rings $R$ and nilpotent ideals $I$ to an abelian category,  we let $F^{\circ}(R,I)$ denote the kernel of the map from $F(R,I)$ to $F(R/I,0).$ We informally refer to this object as the {\it infinitesimal part of} $F.$ We have the corresponding notion for the category of artin local algebras over a field, since their maximal ideals are nilpotent.

\section{The Infinitesimal weight two motivic cohomology} 
Fix a field $k$ of characteristic 0. 
Suppose that $X/k$ is an  scheme of finite type over $k,$ and $\mathcal{I} \subseteq \mathcal{O}_{X},$ $\mathcal{I}^2$=0, a square-zero ideal, such that if $\underline{X}$  denotes the closed subscheme defined by $\mathcal{I}$ in $X,$ then $\underline{X}$ is  a smooth variety over $k.$ These assumptions imply that the imbedding $\underline{X} \hookrightarrow X$ is Zariski locally  split \cite[Proposition 4.4]{def}. In this section, we will define the candidate for the weight two infinitesimal motivic cohomology of $X.$

\subsection{Weight one infinitesimal motivic complex} First, we will start with the trivial case of weight one. For a regular scheme $Y,$ the weight one motivic complex $\Gamma_{Y}(1)$ is quasi-isomorphic to $\mathcal{O}_{Y} ^{\times}[-1].$ We define the complex $(1+\mathcal{I})[-1]$  of Zariski sheaves on $X,$ which is quasi-isomorphic to the cone of   the map $\mathcal{O}_{X} ^{\times}[-1] \to \mathcal{O}_{\underline{X}} ^{\times}[-1]  $ as the weight one infinitesimal motivic cohomology complex $\Gamma_{X} ^{\circ}(1).$  We define 
$$
{\rm H}^{i} _{\M}(X,\mathbb{Q}^{\circ}(1)):={\rm H}^{i}(X,\Gamma_{X} ^{\circ}(1)).
$$

\subsection{Weight two infinitesimal motivic complex} 

For a ring  $A,$  let $B_{2}(A)$ denote the $\mathbb{Q}$-space generated by $[x],$ with $x \in A^{\flat},$ subject to the relations 
\begin{eqnarray*}
 [x]-[y]+[y/x]-[(1-x^{-1})/(1-y^{-1})]+[(1-x)/(1-y)]=0,
 \end{eqnarray*}
  for  $x,$ $y \in A^{\times}$ such that $(1-x)(1-y)(1-x/y) \in A^{\times}.$ Let  $\Gamma_{A}(2)$ denote the Bloch complex: 
 $$
B_{2}(A) \xrightarrow{\delta} \Lambda^2 A^{\times},
$$
where $\delta([a]):=(1-a)\wedge a$ and $B_{2}(A)$ is in degree 1.  This complex was considered for local rings in \cite{unv1}. 

If $I\subseteq A$ is a nilpotent ideal, we let $\underline{A}:=A/I.$ Since the map from $\Gamma_{A}(2)$ to $\Gamma_{\underline{A}}(2)$  is surjective, its cone is quasi-isomorphic to 
$$
B_{2} ^{\circ}(A) \xrightarrow{\delta^{\circ}} (\Lambda^{2}A^{\times})^{\circ},
$$
which we denote by   $\Gamma^{\circ}_{A}(2).$ Note that this complex in fact depends on $I,$ but we suppress it from the notation since $I$ will always  be clear from the context. 
If $A$ is also a local ring, the cokernel of $\delta^{\circ}$ is $K_{2} ^{M} (A)^{\circ} \simeq (\Omega^{1} _{A}/dA)^{\circ}.$ The composition of the map from $(\Lambda^{2}A^{\times})^{\circ}$ to 
$(\Omega^{1} _{A}/dA)^{\circ}$ is given by $\log dlog,$ which sends an element $a\wedge b,$ with $a \in 1+I$ and $b$ in $A^{\times}$ to $\log (a)\frac{db}{b}.$  We let  $F((\Lambda^{2}A^{\times})^{\circ}):={\rm im} (\delta^{\circ}),$ and $F\Gamma_{A} ^{\circ}(2)$ the subcomplex of $\Gamma_{A} ^{\circ}(2)$ which agrees with it in degree 1 and is equal to  $F((\Lambda^{2}A^{\times})^{\circ})$ in degree 2. Sheafifying this,  we obtain the sheaves of complexes $\Gamma_{X}(2), $ $\Gamma_{X} ^{\circ}(2)$ and $F\Gamma_{X} ^{\circ}(2)$ in the Zariski topology. We define 
$$
{\rm H}^{i} _{\M}(X,\mathbb{Q}^{\circ}(2)):={\rm H}^{i}(X,\Gamma_{X} ^{\circ}(2)).
$$

\section{Regulator to Andr\'{e}-Quillen homology}\label{reg}

\subsection{Andr\'{e}-Quillen homology.}\label{AQ} We refer to \cite{lod} as a general reference for this section. 
Let $R$ be any commutative ring with unity and  $A$  an $R$-algebra. Let $P_{*}$ be a free simplicial  $R$-algebra which is a resolution of  $A.$  Then the cotangent complex $\mathbb{L}_{*}(A|R)$ of $A$ over $R$ is the complex, in the derived category of complexes of $A$-modules, associated to the simplicial $A$-module whose degree $n$ object is given by  $\mathbb{L}_{n}(A|R):=\Omega^1 _{P_{n}/R} \otimes _{P_n}A.$ Andr\'{e}-Quillen homology of $A$ over $R$ with coefficients in an $A$-module $M$ is then given as the homology of    $\mathbb{L}_{*}(A|R)\otimes _{A}M:$ 
$$
D_{*}(A|R,M):={\rm H}_{*}(\mathbb{L}_{*}(A|R)
\otimes _A M).
$$
We denote $D_{*}(A|R,A)$ by $D_{*}(A|R).$ 

Suppose from now on that $R=\mathbb{Q}.$ We then denote $D_{*}(A|\mathbb{Q},M)$ by $D_{*}(A,M)$ and  $D_{*}(A|\mathbb{Q})$ by $D_{*}(A).$ If $A$ is a quotient of a smooth algebra $B$ with kernel $J,$ then 
 the transitivity long exact sequence for  $\mathbb{Q} \to B \to A$  gives  
$$
\cdots \to D_{1}(B,A) \to D_{1}(A)\to D_{1}(A|B)\to D_{0}(B,A)\to \cdots .
$$

We have $D_{0}(B,A)=\Omega^{1}_{B} \otimes_{B}A,$ and since $B/\mathbb{Q}$ is smooth,  $D_{1}(B,A)=0.$  In order to compute,  $D_{1}(A|B),$ we use the presentation 
$$
0\to J \to B \to A\to 0.
$$
The naive cotangent complex of $A$ relative to $B$ is then  given by 
$$
J/J^2 \to \Omega^{1} _{B/B}=0.
$$
Since the naive cotangent complex is the good truncation of the cotangent complex in degree 1, it can be used to compute the first Andr\'{e}-Quillen homology. Therefore,  
$D_{1}(A|B)=J/J^2.$ Since the map to  $D_{0}(B,A)=\Omega^{1}_{B}/J\cdot \Omega^{1}_{B}$ is the natural map induced by the differentiation, by the above exact sequence we obtain that 
$$
D_{1}(A)=ker(J/J^2 \xrightarrow{d} \Omega^{1}_{B}/J\cdot \Omega^{1}_{B} ).
$$

\subsection{Branch of the dilogarithm.} Let $A$ be an $k$-algebra, with a square-zero ideal $I$ such that $\underline{A}:=A/I$ is smooth over $k.$  Suppose further that   $\tau: \underline{A} \to A$ is a splitting  of the canonical  projection $A\to \underline{A}.$ The main theorem of this section will be the construction of the regulator map in this context.

\begin{theorem}
Associated to the splitting $\tau,$ there is a regulator map 
$$
\ell i_{2,\tau}: B_{2}(A) \to D_{1}(A),
$$
from the Bloch group to the first Andr\'{e}-Quillen homology of A.
\end{theorem}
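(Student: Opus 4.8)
The plan is to define $\ell i_{2,\tau}$ on the generators $[x]$, $x\in A^{\flat}$, of $B_{2}(A)$ and then to check that each defining five-term relation is sent to $0$ in $D_{1}(A)$; since $B_{2}(A)$ is the $\mathbb{Q}$-vector space on these generators modulo those relations, this suffices. The first step is to exploit the splitting $\tau$ to linearize everything. Writing $\underline{A}=A/I$ and using $I^{2}=0$, the unit $x$ factors uniquely as $x=\tau(\bar{x})(1+\epsilon_{x})$ with $\epsilon_{x}\in I$, and, because $x\in A^{\flat}$ forces $\bar{x}\in\underline{A}^{\flat}$, the element $1-x$ factors likewise as $(1-\tau(\bar{x}))(1-\eta_{x})$ with $\eta_{x}\in I$. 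This decomposition is the key device: logarithms of elements of $1+I$ truncate to the identity on $I$, and products of such elements linearize, so the transcendental dilogarithm collapses into a polynomial expression in $\epsilon_{x}$, $\eta_{x}$, $\bar{x}$ and the differentials of the units of $\underline{A}$.

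With this in hand I would write down $\ell i_{2,\tau}[x]\in D_{1}(A)$ as an explicit combination of a ``$\log\cdot d\log$'' term and a genuinely dilogarithmic term, modelled on the first-order variation of the classical $\mathrm{Li}_{2}$-regulator in the infinitesimal direction cut out by $I$; only the infinitesimal part survives, since the purely $\underline{A}$-part reduces isomorphically and is therefore killed. To see that the resulting element genuinely lies in $D_{1}(A)$ I would use the presentation recalled in \textsection\ref{AQ}, namely $D_{1}(A)=\ker\!\big(J/J^{2}\xrightarrow{d}\Omega^{1}_{B}/J\cdot\Omega^{1}_{B}\big)$ for a smooth $B\twoheadrightarrow A$, and check that the expression is $d$-closed. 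In parallel I would give the conceptual construction, realizing $\ell i_{2,\tau}$ through the cotangent complex of $A$: the splitting makes $A$ the trivial square-zero extension $\underline{A}\ltimes I$, and the dilogarithm then factors through a natural map of complexes whose target is $D_{1}$. Matching the two constructions both produces the formula and makes $\mathbb{Q}$-linearity and functoriality in the triple $(A,I,\tau)$ transparent.

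The main obstacle is the verification of the five-term relation, which is precisely the infinitesimal incarnation of the functional equation of the dilogarithm. Applying the $1+I$ decomposition to each of the five arguments and expanding modulo $I^{2}$, I expect the relation to split into two independent pieces: a Steinberg-type cancellation among the $\log\cdot d\log$ correction terms, which can be handled by bilinearity of the wedge together with the identity $\delta([a])=(1-a)\wedge a$; and a residual purely dilogarithmic identity in $D_{1}(A)$, which is exactly the additive five-term relation established in the additive dilogarithm setting. I would therefore reduce the computation to the generalization of the earlier additive dilogarithm construction referenced in the introduction, rather than verify the functional equation by hand, using the conceptual construction as an independent check since there well-definedness follows from functoriality of the cotangent complex. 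Once well-definedness is secured, $\mathbb{Q}$-linearity is immediate from the construction on generators, completing the proof.
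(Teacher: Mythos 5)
Your outline follows the paper's strategy — use $\tau$ to split every unit as $x=\tau(\bar{x})(1+\epsilon_x)$, present $A$ as a quotient of a smooth algebra so that $D_1(A)=\ker\bigl(J/J^2\to\Omega^1_B/J\Omega^1_B\bigr)$, define the map on generators, and check the five-term relation — but the two load-bearing steps are not actually carried out, and one of them is mis-described in a way that would derail the computation. First, you never produce the formula. The paper's map sends $[a]$ to $-\tfrac{1}{2}\,(\tilde{a}-\hat{\tau}(\underline{a}))^3/\bigl(\hat{\tau}(\underline{a})^2(\hat{\tau}(\underline{a})-1)^2\bigr)$, a single cubic expression in a lift $\tilde{a}$ to the completion $\hat{B}$ of $B$ along $J$; there is no separate ``$\log\cdot d\log$ term plus dilogarithmic term'' decomposition in the definition. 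Everything downstream (independence of the lift, landing in $\ker(d)$, the five-term relation) is a statement about this specific cubic, so without it there is nothing to verify. Note also that this expression is identically zero in $A$ itself (since $I^2=0$ forces $\epsilon_x^2=0$ there); it is nonzero only because it is computed in $\hat{J}/\hat{J}^2$, where $\hat{I}^3\not\subseteq\hat{J}^2$. Your plan to ``expand modulo $I^2$'' therefore cannot detect the regulator at all: the entire content lives in the third-order terms of the lifts, which survive only in $\hat{B}/\hat{J}^2$, not in $A$.

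Second, the five-term relation cannot simply be outsourced to the additive dilogarithm of the earlier work, which treats truncated polynomial algebras over a field, whereas here $\underline{A}$ is an arbitrary smooth $k$-algebra and $I$ an arbitrary square-zero ideal. What the paper actually does is choose, for each of the five arguments $x$, $y$, $y/x$, $(1-x^{-1})/(1-y^{-1})$, $(1-x)/(1-y)$, an explicit compatible lift of its infinitesimal part to $\hat{J}$ (e.g.\ the lift $\tfrac{b}{a}(\tfrac{\tilde{\beta}}{b}-\tfrac{\tilde{\alpha}}{a})$ for $y/x$), evaluate the cubic formula on each, and reduce the alternating sum to one explicit polynomial identity in $\tilde{\alpha},\tilde{\beta},a,b$. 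That reduction — the choice of lifts and the bookkeeping that makes the five cubes comparable in $\hat{J}/\hat{J}^2$ — is the substance of the proof, and it does not split into a ``Steinberg-type cancellation'' plus a quoted identity; the wedge/Steinberg part belongs to $\delta$, not to the relation defining $B_2(A)$. Your conceptual fallback (factoring through the cotangent complex of the trivial extension $\underline{A}\ltimes I$) is closer in spirit to the paper's second construction, which integrates $-3\log^{\circ}_{\hat{\tau}}\wedge\underline{d}\log$ against $\delta$, but there too the descent from $B_2(\hat{B})$ to $B_2(A)$ requires the same cubic computation ($\tilde{\beta}^3-\tilde{\alpha}^3\in\hat{J}^2$ for two lifts of the same $\alpha\in I$), so it is not an independent shortcut around the explicit work.
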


We will give two different constructions of $\ell i_{2,\tau}. $ The first one is in terms of explicit formulas. The second one is more conceptual.  

\subsubsection{First construction}\label{First construction} 

We will first construct $\ell i_{2,\tau}$ using various choices and then show that the construction is independent of these choices. 
 Using the splitting $\tau$ we regard $A $  as an $\underline{A}$-algebra.  Express $A$  as a quotient $B \twoheadrightarrow A$ of a  smooth $\underline{A}$-algebra $B.$ Let $\hat{B}$ be the completion of $B$ along the kernel of this map,  $\hat{J}$ be  the kernel of the projection    $\hat{B} \twoheadrightarrow A,$  and $\hat{I}$ be the  inverse image of $I$ in $\hat{B.}$  We  denote the structure map $\underline{A} \to \hat{B}$ by $\hat{\tau}.$ Since,  by assumption, $I^2=0,$ we have $\hat{I}^2 \subseteq \hat{J}.$

We define a map 
$$
\ell i_{2,\tau}( \hat{B},\hat{\tau}): \mathbb{Q}[A^{\flat}] \to \ker (\hat{J}/\hat{J}^2 \to \Omega^{1} _{\hat{B}}/\hat{J}\Omega^{1}_{\hat{B}} =\Omega^{1} _{\hat{B}/\hat{J}^2}/\hat{J}\Omega^{1}_{\hat{B}/\hat{J}^2} ) , 
$$
by sending $[a ]$ to 
$$
-\frac{1}{2}\frac{(\tilde{a}-\hat{\underline{a}})^3}{\hat{\underline{a}}^2(\hat{\underline{a}}-1)^2},
$$
where $\hat{\underline{a}}:=\hat{\tau}(\underline{a}),$ with $\underline{a}$ is the image of $a$ under the map $A \twoheadrightarrow \underline{A},$ and $\tilde{a}$ is any lifting  of $a \in A$ to an element in $\hat{B}.$ 
Note that the value of $\ell i_{2,\tau}( \hat{B},\hat{\tau})$ at $[a]$  is 0, if $a \in \tau(\underline{A}).$ 

First, we show that the value of $\ell i_{2,\tau}( \hat{B},\hat{\tau})$ on $[a]$ does not depend on the  choice of the lifting $
\tilde{a} $ in $\hat{B}$ and that it lands in the above subspace of $\hat{J}/\hat{J}^2.$  

\begin{proposition}\label{well-def} For $a \in A^{\flat},$ 
 $\ell i_{2,\tau} (\hat{B},\hat{\tau})([ a ])$ is a  well-defined element of $\ker (\hat{J}/\hat{J}^2 \to \Omega^{1} _{\hat{B}}/\hat{J}\Omega^{1}_{\hat{B}}).$
\end{proposition}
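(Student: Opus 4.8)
The plan is to verify two independent assertions: that the displayed expression genuinely defines an element of $\hat{J}/\hat{J}^{2}$ not depending on the chosen lift $\tilde{a}$, and that this element is killed by the map to $\Omega^{1}_{\hat{B}}/\hat{J}\Omega^{1}_{\hat{B}}$. Both reduce to elementary bookkeeping with the nested ideals $\hat{J}\subseteq\hat{I}$ subject to $\hat{I}^{2}\subseteq\hat{J}$. First I would record the basic membership facts. Write $v:=\tilde{a}-\hat{\underline{a}}$ and $w:=\hat{\underline{a}}^{2}(\hat{\underline{a}}-1)^{2}$. Since $\tilde{a}$ lifts $a$ and $\hat{\underline{a}}=\hat{\tau}(\underline{a})$ maps to $\tau(\underline{a})$ in $A$, the difference $v$ maps to $a-\tau(\underline{a})\in I$, so $v\in\hat{I}$; because $\underline{a}$ and $\underline{a}-1$ are units in $\underline{A}$, their images $\hat{\underline{a}}$ and $\hat{\underline{a}}-1=\hat{\tau}(\underline{a}-1)$ are units in $\hat{B}$, whence $w\in\hat{B}^{\times}$. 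From $\hat{I}^{2}\subseteq\hat{J}$ and $v\in\hat{I}$ we get $v^{2}\in\hat{J}$ and therefore $v^{3}\in\hat{I}\hat{J}\subseteq\hat{J}$; dividing by the unit $w$ then shows $-\tfrac{1}{2}v^{3}/w\in\hat{J}$, so its class in $\hat{J}/\hat{J}^{2}$ is defined.

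For independence of the lift, I would replace $\tilde{a}$ by $\tilde{a}+j$ with $j\in\hat{J}$ and expand
$$
(v+j)^{3}=v^{3}+3v^{2}j+3vj^{2}+j^{3}.
$$
The term $3v^{2}j$ lies in $\hat{J}^{2}$ because $v^{2}\in\hat{J}$ and $j\in\hat{J}$, while $3vj^{2}$ and $j^{3}$ lie in $\hat{J}^{2}$ since $j^{2}\in\hat{J}^{2}$; hence $(v+j)^{3}\equiv v^{3}\pmod{\hat{J}^{2}}$, and dividing by the unit $w$ preserves this congruence. Thus the class of $-\tfrac{1}{2}v^{3}/w$ in $\hat{J}/\hat{J}^{2}$ is independent of the choice of $\tilde{a}$.

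To see that the element lies in the asserted kernel, recall that the map $\hat{J}/\hat{J}^{2}\to\Omega^{1}_{\hat{B}}/\hat{J}\Omega^{1}_{\hat{B}}$ is the one induced by the differential $d$. I would compute
$$
d\!\left(\frac{v^{3}}{w}\right)=\frac{3v^{2}\,dv}{w}-\frac{v^{3}\,dw}{w^{2}},
$$
and observe that the first summand lies in $\hat{J}\Omega^{1}_{\hat{B}}$ because $v^{2}\in\hat{J}$, and the second because $v^{3}\in\hat{J}$. Hence the image of $-\tfrac{1}{2}v^{3}/w$ in $\Omega^{1}_{\hat{B}}/\hat{J}\Omega^{1}_{\hat{B}}$ vanishes, which gives the claim.

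I do not expect a genuine obstacle here: once the containments $v\in\hat{I}$, $\hat{I}^{2}\subseteq\hat{J}$ and the unit status of $w$ are in place, everything follows from the principle that any monomial containing $v$ to a power at least two, or containing a factor from $\hat{J}$, already lands in $\hat{J}$. The only point demanding care is the consistent tracking of which products fall into $\hat{J}^{2}$ versus merely $\hat{J}$; the cube in the numerator is exactly what forces the binomial error terms into $\hat{J}^{2}$ (for well-definedness) and the derivative into $\hat{J}\Omega^{1}_{\hat{B}}$ (for the kernel), so the exponent $3$ is doing the real work.
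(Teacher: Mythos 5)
Your proof is correct and follows essentially the same route as the paper's: both arguments rest on $v=\tilde a-\hat{\underline a}\in\hat I$ together with $\hat I^{2}\subseteq\hat J$, dispose of the change of lift by expanding the cube modulo $\hat J^{2}$, and note that $d$ of an element of $\hat I^{3}$ (times a unit) lands in $\hat J\,\Omega^{1}_{\hat B}$. Your treatment is if anything slightly more explicit than the paper's, e.g.\ in tracking the unit denominator $w$ through the quotient rule.
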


\begin{proof}
Using the splittings $\tau$ and $\hat{\tau},$ we will assume without loss of generality that 
$A=\underline{A} \oplus I$ and $\hat{B}=\underline{A} \oplus \hat{J} .$  

First, let us show that the definition is independent of the choice of the lifting $\tilde{a}.$ If $\tilde{a}'$ is another lifting, then $\tilde{a} '=\tilde {a} + \alpha, $ for some $\alpha \in \hat{J}.$   We need to show that 
$$
(\tilde{a}'-\underline{\hat{a}})^3-(\tilde{a}-\underline{\hat{a}})^3 =0
$$
 in $\hat{B}/\hat{J}^2.$ 
Letting $\tilde{b}:=\tilde{a} -\hat{\underline{a}} \in \hat{I},$ this is equivalent to showing that 
$$
(\tilde{b}+\alpha)^3- \tilde{b}^3=0 
 $$
 in $\hat{B}/\hat{J}^2.$ 
 Since $\alpha \in \hat{J},$ $\tilde{b}\in \hat{I}$ and $\hat{I}^2 \subseteq \hat{J},$ the above expression is in   $ (\hat{I}^2 \hat{J}+\hat{J}^2) \subseteq  \hat{J}^2.$ This proves  the independence with respect to the choice of the lifting $\tilde{a}.$ 
 
 Since $\tilde{a}-\hat{\underline{a}} \in \hat{I}$ and $\hat{I}^2 \subseteq \hat{J},$ $\ell i_{2,\tau} (\hat{B},\hat{\tau})(\{ a ) \in \hat{I}^3/\hat{J}^2 \subseteq \hat{J}/\hat{J}^2.$ Therefore, the  image of this element under $d$  lies in  $d(\hat{I}^3) \subseteq \hat{I}^2 \Omega_{\hat{B}/\hat{J}^2} \subseteq \hat{J} \Omega^{1} _{\hat{B}/\hat{J}^2}.$   This finishes the proof of the proposition. 
 \end{proof}
We will abuse the notation and denote $\ell i_{2,\tau} (\hat{B},\hat{\tau})([a])$ by $\ell i_{2,\tau} (\hat{B},\hat{\tau})(a)$. Next we prove that $\ell i_{2,\tau} (\hat{B},\hat{\tau})$  satisfies the five-term functional equation of the dilogarithm and hence descends to give a map from $B_{2}(A).$ 

\begin{proposition}
 The above function $\ell i_{2,\tau} (\hat{B},\hat{\tau})$ factors through the projection 
 $
 \mathbb{Q}[A^{\flat}] \twoheadrightarrow B_{2}(A)
 $
 to induce a map 
 $$
 B_{2}(A) \to \ker (\hat{J}/\hat{J}^2 \to \Omega^{1} _{\hat{B}}/\hat{J}\Omega^{1}_{\hat{B}}).
 $$
\end{proposition}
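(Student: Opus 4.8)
The plan is to check the five-term relation directly, by evaluating $\ell i_{2,\tau}(\hat B,\hat\tau)$ on the combination $[x]-[y]+[y/x]-[(1-x^{-1})/(1-y^{-1})]+[(1-x)/(1-y)]$ and reducing, modulo $\hat J^2$, to a single identity of cubic differential forms in the function field $\mathbb{Q}(x,y)$. Throughout, write $f_1,\dots,f_5$ for the five arguments viewed as rational functions of $x,y$ and $\epsilon_i\in\{+1,-1\}$ for their signs in the relation. First I would observe that the hypothesis $(1-x)(1-y)(1-x/y)\in A^\times$ forces each $f_i$ and each $1-f_i$ to be a unit, so all five arguments lie in $A^\flat$ and every denominator appearing below is invertible; since $\hat B$ is complete along $\hat J$, units lift to units, and $\bar x:=\hat\tau(\underline x)$, $\bar y:=\hat\tau(\underline y)$ are units with $f_i(\bar x,\bar y)=\hat{\underline{f_i}}$ invertible and $f_i(\bar x,\bar y)-1$ invertible.

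The key is to compute all five terms through one coherent choice of liftings. Fix lifts $\tilde x,\tilde y\in\hat B$ of $x,y$, put $s:=\tilde x-\bar x$ and $t:=\tilde y-\bar y$ in $\hat I$, and take $f_i(\tilde x,\tilde y)$ as the lift of $f_i(x,y)$; this is permissible because Proposition \ref{well-def} guarantees independence of the lift. Then $f_i(\tilde x,\tilde y)-\hat{\underline{f_i}}=f_i(\bar x+s,\bar y+t)-f_i(\bar x,\bar y)$ has no constant term, and the decisive simplification is that only its linear part matters after cubing: since $\hat I^2\subseteq\hat J$ we have $\hat I^4\subseteq\hat J^2$, so every contribution of order $\geq 4$ in $(s,t)$ dies modulo $\hat J^2$. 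Expanding $f_i$ to first order about $(\bar x,\bar y)$ therefore gives
$$\bigl(f_i(\tilde x,\tilde y)-\hat{\underline{f_i}}\bigr)^3\equiv\bigl(\partial_x f_i\cdot s+\partial_y f_i\cdot t\bigr)^3\pmod{\hat J^2},$$
with the partials evaluated at $(\bar x,\bar y)$.

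Substituting into the definition of $\ell i_{2,\tau}(\hat B,\hat\tau)$, the five-term sum reduces modulo $\hat J^2$ to
$$-\tfrac12\sum_{i=1}^5\epsilon_i\,\frac{(\partial_x f_i\cdot s+\partial_y f_i\cdot t)^3}{f_i(\bar x,\bar y)^2\,(f_i(\bar x,\bar y)-1)^2}.$$
Expanding each cube by the binomial theorem, the coefficient of the monomial $s^{\,j}t^{\,3-j}$ is $\binom{3}{j}$ times $\sum_i\epsilon_i(\partial_x f_i)^{\,j}(\partial_y f_i)^{\,3-j}/f_i^{\,2}(f_i-1)^2$. It therefore suffices to establish the four scalar identities
$$\sum_{i=1}^5\epsilon_i\,\frac{(\partial_x f_i)^{\,j}\,(\partial_y f_i)^{\,3-j}}{f_i^{\,2}\,(f_i-1)^2}=0\qquad(j=0,1,2,3),$$
equivalently, the vanishing of the cubic form $\sum_i\epsilon_i\,(df_i)^3/f_i^2(f_i-1)^2$ in $\mathrm{Sym}^3\Omega^1_{\mathbb{Q}(x,y)/\mathbb{Q}}$. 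This is exactly the infinitesimal, third-order shadow of the classical five-term relation, and it is a universal identity in $\mathbb{Q}(x,y)$; all denominators being units, it then specializes under $x\mapsto\bar x,\ y\mapsto\bar y$.

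The main obstacle is this final rational-function identity, which is the computational heart of the proof. I would verify it one homogeneous component at a time by clearing denominators. For instance, for the $(dy)^3$-component (the case $j=0$) the common denominator is $y^2(y-1)^2(y-x)^2$ and the numerator collapses identically to $0$ after expansion; the remaining three components are handled by the same direct computation. The only other points needing attention are purely formal: that the coherent lifts $f_i(\tilde x,\tilde y)$ and all the divisions are admissible in $\hat B$, which follows from the unit and completeness remarks above, and that the passage from the cubic form to the scalar identities loses nothing, since vanishing of the scalar coefficients is already sufficient for the combination to vanish in $\hat J/\hat J^2$.
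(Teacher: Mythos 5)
Your proposal is correct and follows essentially the same route as the paper: both verify the five-term relation by expanding each of the five arguments to first order in the infinitesimal direction (higher-order terms dying modulo $\hat{J}^2$ since $\hat{I}^4\subseteq\hat{J}^2$) and reducing to a single cubic rational-function identity, which is exactly the paper's displayed identity under the substitution $dx\mapsto\tilde{\alpha}$, $dy\mapsto\tilde{\beta}$. The only cosmetic difference is that the paper writes the five first-order expansions and their liftings explicitly in the split form $A=\underline{A}\oplus I$, while you package the same computation as the vanishing of $\sum_i\epsilon_i\,(df_i)^3/f_i^2(f_i-1)^2$ in $\mathrm{Sym}^3\Omega^1$.
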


\begin{proof}
Again, without loss of generality, we  assume that $A$ and $\hat{B}$ are split as above. For $x:=a+\alpha \in A=\underline{A} \oplus I$ with $a\in \underline{A} ^{\flat}$ and $\alpha \in I , $ $\ell i_{2,\tau}$ maps $x$ to 
$$
-\frac{1}{2} \frac{\tilde{\alpha}^3}{a^2 (a-1)^2},
$$
where $\tilde{\alpha}$ is any lifting of $\alpha$ to an element of $\hat{J} \subseteq \hat{B}=\underline{A}\oplus \hat{J} .$  If $y:=b+\beta$ is a similar element, we need to show that $\ell i_{2,\tau}$ maps 
$$
[x] -[y]+[\frac{y}{x}] -[\frac{1-x^{-1}}{1-y^{-1}} ] +[\frac{1-x}{1-y} ]
$$
to 0. Fix liftings  $\tilde{\alpha}$ and $\tilde{\beta}$ of $\alpha $ and $\beta$ to $\hat{J}.$ 
Note that $\frac{y}{x}=\frac{b}{a}+\frac{b}{a}(\frac{\beta}{b}-\frac{\alpha}{a}).$ Using the lifting $\frac{b}{a}(\frac{\tilde{\beta}}{b}-\frac{\tilde{\alpha}}{a})$ of $\frac{b}{a}(\frac{\beta}{b}-\frac{\alpha}{a}),$ we see that  $\ell i_{2,\tau}$ maps $\frac{y}{x}$ to 
$$
-\frac{1}{2}\frac{(a\tilde{\beta}-b\tilde{\alpha})^3}{(ab(a-b))^2}.
$$
Similarly, $\frac{1-x^{-1}}{1-y^{-1}}=\frac{1-a^{-1}}{1-b^{-1}}+\frac{1-a^{-1}}{1-b^{-1}}(\frac{a^{-2}\alpha}{1-a^{-1}}-\frac{b^{-2}\beta}{1-b^{-1}})$ and using the lifting $\frac{1-a^{-1}}{1-b^{-1}}(\frac{a^{-2}\tilde{\alpha}}{1-a^{-1}}-\frac{b^{-2}\tilde{\beta}}{1-b^{-1}}),$ we see that this is mapped to 
$$
-\frac{1}{2} \frac{(b(b-1)\tilde{\alpha}-a(a-1)\tilde{\beta})^3}{(ab(a-1)(b-1)(a-b))^2}.
 $$
 Finally, $\frac{1-x}{1-y}=\frac{1-a}{1-b}+\frac{1-a}{1-b}(\frac{\beta}{1-b}-\frac{\alpha}{1-a}) ,$ and using the lifting $\frac{1-a}{1-b}(\frac{\tilde{\beta}}{1-b}-\frac{\tilde{\alpha}}{1-a}),$ we see that $\ell i_{2,\tau}$ maps $\frac{1-x}{1-y}$ to 
 $$
 -\frac{1}{2}\frac{((b-1)\tilde{\alpha}-(a-1)\tilde{\beta})^3}{((a-1)(b-1)(a-b))^2}. 
 $$
 The functional equation is then a consequence of the following  identity: 
 $$
 \frac{\tilde{\alpha}^3}{(a(a-1))^2}- \frac{\tilde{\beta}^3}{(b(b-1))^2}+\frac{(a\tilde{\beta}-b\tilde{\alpha})^3}{(ab(a-b))^2}-\frac{(b(b-1)\tilde{\alpha}-a(a-1)\tilde{\beta})^3}{(ab(a-1)(b-1)(a-b))^2}+\frac{((b-1)\tilde{\alpha}-(a-1)\tilde{\beta})^3}{((a-1)(b-1)(a-b))^2}=0.
 $$
 \end{proof}
 
 Finally, we need to show how to get a map to   Andr\'{e}-Quillen homology. Note that $J/J^2=\hat{J}/\hat{J}^2$ and $\Omega^1 _{B}/J \Omega^1 _{B}=\Omega^1 _{\hat{B}}/\hat{J} \Omega^1 _{\hat{B}}. $ Therefore, we have 
 $$
 \ker (J/J^2 \to \Omega^{1} _{B}/J\Omega^{1}_{B} )= \ker (\hat{J}/\hat{J}^2 \to \Omega^{1} _{\hat{B}}/\hat{J}\Omega^{1}_{\hat{B}} ) .
 $$
 Since $B$ is a smooth $\underline{A}$-algebra and $\underline{A}$ is smooth over $\mathbb{Q}$, $B$ is smooth over $\mathbb{Q}.$
 By the discussion in \textsection \ref{AQ}, we therefore have $ \ker (J/J^2 \to \Omega^{1} _{B}/J\Omega^{1}_{B} )=D_{1}(A).$     We thus obtain the  map from $B_{2}(A)$ to $D_{1}(A)$ we were looking for.  We need to show that this map is independent of the choice of the presentation $B \twoheadrightarrow A,$ where $B/\underline{A}$ is smooth. If $B' \twoheadrightarrow A$ is another such map by considering $B \otimes_{\underline{A}} B' \twoheadrightarrow A,$ we may assume without loss of generality that there is a commutative triangle 
 $$
 \xymatrix{
 B'\ar[d] \ar@{->>}[rd] \\
 B\ar@{->>}[r] & A 
 .}
 $$
 Using the functoriality of the construction above, we achieve the  proof of the independence.

\subsubsection{Second construction.} In this subsection, we give a more conceptual construction of the above regulator. As above,  we start with a presentation $B\twoheadrightarrow A$ of $A$ as a quotient of a smooth $\underline{A}$-algebra $B$  with kernel $J.$ 

 Let $d:\hat{B} \to \Omega^1 _{\hat{B}}$ denote the absolute differential and $\underline{d}: \hat{B} \to \Omega^1 _{\hat{B}/\underline{A}}$ the differential relative to $\underline{A}.$ Note that since $\hat{B}$ is considered as an $\underline{A}$-algebra via the structure map $\hat{\tau},$ the map $\underline{d}$ depends on $\hat{\tau}$ even though we suppress this in the notation.   

We start with defining a map 
$$
\xymatrix{
\Lambda^2 \hat{B}^{\times} \ar^{-3 \cdot \log ^{\circ} _{\hat{\tau}} \wedge \underline{d} log }[rr] & & \Omega^{1} _{\hat{B}/\underline{A}}
},
$$
which we will integrate on the image of $\delta$ in a suitable sense. The map $\log ^{\circ} _{\hat{\tau}}:\hat{B}^{\times} \to \hat{B}$ is defined by $\log ^{\circ} _{\hat{\tau}}(a):= \log (\frac{a}{\hat{\tau}(\underline{a})})$ and $\underline{d} log: \hat{B}^{\times}  \to \Omega^{1} _{\hat{B}/\underline{A}}$ by $\underline{d} log(a):=\frac{\underline{d}(a)}{a}.$ Then $\log ^{\circ} _{\hat{\tau}} \wedge \underline{d} log $ is the map sending $a\wedge b$ to $\log ^{\circ} _{\hat{\tau}}(a) \underline{d} log(b)-\log ^{\circ} _{\hat{\tau}}(b) \underline{d} log(a).$

We will also need the following basic lemma which implies the uniqueness of the anti-derivative if we restrict to the infinitesimal part. 

\begin{lemma}
The map 
$\hat{B}^{\circ} \xrightarrow{\underline{d}} \Omega^1 _{\hat{B}/\underline{A}}$
is injective. 
\end{lemma}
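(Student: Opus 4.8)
The plan is to reduce the statement to a concrete power series model, where the injectivity becomes the elementary fact that in characteristic $0$ a power series all of whose partial derivatives vanish is constant. First I would pin down the object in question: since $\hat{B}/\hat{I}\cong\underline{A}$ and $\hat{\tau}$ splits this quotient, the infinitesimal part is $\hat{B}^{\circ}=\ker(\hat{B}\to\underline{A})=\hat{I}$, and because $\underline{d}$ is $\underline{A}$-linear and kills $\hat{\tau}(\underline{A})$, the claim is precisely that $\ker(\underline{d})\cap\hat{I}=0$. The first key observation is that $\hat{B}$ may be regarded as a completion along the ideal of a \emph{section} of a smooth morphism. Indeed, set $\tilde{J}:=\ker(B\to\underline{A})$, so that $J\subseteq\tilde{J}$ and $\tilde{J}/J\cong I$. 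Since $I^{2}=0$, any product of two elements of $\tilde{J}$ maps into $I^{2}=0$ inside $A$, whence $\tilde{J}^{2}\subseteq J\subseteq\tilde{J}$; therefore the $J$-adic and $\tilde{J}$-adic topologies on $B$ coincide, and $\hat{B}$ is equally the completion of $B$ along $\tilde{J}$, the ideal of the section $\underline{A}\to B$ determined by $\hat{\tau}$.

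Next I would use the smoothness of $B$ over $\underline{A}$. The conormal module $\tilde{J}/\tilde{J}^{2}$ is locally free over $\underline{A}$, and since injectivity may be tested locally on $\operatorname{Spec}\underline{A}$, I may assume it free of rank $n$, the relative dimension. Formal smoothness then identifies the completion along the section with a power series ring, $\hat{B}\cong\underline{A}[[t_{1},\dots,t_{n}]]$, under which $\hat{I}$ becomes the augmentation ideal $(t_{1},\dots,t_{n})$ and $\underline{d}f=\sum_{i}\frac{\partial f}{\partial t_{i}}\,dt_{i}$ in the completed differentials $\widehat{\Omega}^{1}_{\hat{B}/\underline{A}}=\bigoplus_{i}\hat{B}\,dt_{i}$. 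If $f\in\hat{I}$ satisfies $\underline{d}f=0$, then every $\partial f/\partial t_{i}$ vanishes; writing $f=\sum_{\alpha}c_{\alpha}t^{\alpha}$ and using $\mathbb{Q}\subseteq\underline{A}$, so that each nonzero exponent $\alpha_{i}$ is invertible, I conclude $c_{\alpha}=0$ for all $\alpha\neq 0$, hence $f\in\underline{A}\cap\hat{I}=0$. Since the canonical map $\Omega^{1}_{\hat{B}/\underline{A}}\to\widehat{\Omega}^{1}_{\hat{B}/\underline{A}}$ intertwines the two differentials, injectivity of $\underline{d}$ into the completed module forces injectivity into $\Omega^{1}_{\hat{B}/\underline{A}}$ itself, which is the assertion.

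I expect the main obstacle to lie in the bookkeeping around the completion rather than in the final computation: one must justify that $\hat{B}$ genuinely is a power series ring over $\underline{A}$, which rests both on the topology identification above (where $I^{2}=0$ is essential) and on the structure theorem for completions of smooth algebras along a section. If one prefers to avoid choosing coordinates, the same conclusion follows intrinsically from the $\hat{I}$-adic filtration: completeness and separatedness give $\bigcap_{n}\hat{I}^{n}=0$, formal smoothness identifies $\operatorname{gr}_{\hat{I}}\hat{B}$ with the symmetric algebra $S^{\bigcdot}_{\underline{A}}(\hat{I}/\hat{I}^{2})$, and on the degree-$n$ piece the composite of $\underline{d}$ with the contraction $S^{n-1}(\hat{I}/\hat{I}^{2})\otimes(\hat{I}/\hat{I}^{2})\to S^{n}(\hat{I}/\hat{I}^{2})$ is multiplication by $n$, invertible in characteristic $0$. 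This shows $\underline{d}$ is injective on each positive-degree graded quotient, and separatedness propagates injectivity to all of $\hat{I}$. Either way, the characteristic $0$ hypothesis enters decisively, as it must.
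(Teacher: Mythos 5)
Your proof is correct, but it takes a different route from the paper's. The paper completes at a closed point $x$ of $\underline{A}$: using smoothness it identifies $\underline{A}_{\hat{x}}\simeq K[[x_1,\dots,x_n]]$ and $\hat{B}_{\hat{x}}\simeq K[[x_1,\dots,x_n,y_1,\dots,y_m]]$, observes that the kernel of $\underline{d}$ on $\hat{B}_{\hat{x}}$ is exactly $\underline{A}_{\hat{x}}$, and concludes via injectivity of $\hat{B}\to\hat{B}_{\hat{x}}$. You instead stay relative over $\underline{A}$: your observation that $I^2=0$ forces $\tilde{J}^2\subseteq J\subseteq\tilde{J}$, so that $\hat{B}$ is the completion of $B$ along the \emph{section} determined by $\hat{\tau}$, is a genuine structural point that the paper leaves implicit, and it buys you the global model $\hat{B}\cong\underline{A}[[t_1,\dots,t_n]]$ without passing to residue fields. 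One caveat on your first route: "injectivity may be tested locally on $\operatorname{Spec}\underline{A}$" is fine for the kernel (localization is flat), but localization does \emph{not} commute with $\tilde{J}$-adic completion (compare $\mathbb{Z}[[t]]\otimes\mathbb{Q}\neq\mathbb{Q}[[t]]$), so identifying the localized objects with the intrinsic objects of the localized rings needs an extra argument; this is precisely the bookkeeping you flag. Your second, coordinate-free argument — $\operatorname{gr}_{\hat{I}}\hat{B}\simeq S^{\bigcdot}_{\underline{A}}(\hat{I}/\hat{I}^2)$, the Euler identity making $\underline{d}$ multiplication by $n$ on the degree-$n$ piece, and separatedness of the $\hat{I}$-adic filtration — sidesteps both the localization issue and the choice of coordinates entirely, and is if anything cleaner than the paper's proof (which itself glosses over the injectivity of $\hat{B}\to\hat{B}_{\hat{x}}$ and the reduction to the connected case). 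Both arguments use characteristic $0$ in the same essential way.
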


\begin{proof}
We may assume without loss of generality that the spectra of $\underline{A}$ and  $\hat{B}$ are connected. With this assumption, $\underline{A}$ and    $\hat{B}$ are an integral domains. Fix a point  $x$ of $\underline{A},$ and let  $\underline{A}_{\hat{x}}$ and $\hat{B}_{\hat{x}}$ be the completions of $\underline{A}$ and $\hat{B}$ at $x.$ The map $\hat{B} \to \hat{B}_{\hat{x}}$ is then an injection. By the smoothness assumptions, $\underline{A}_{\hat{x}}\simeq K[[x_1,\cdots,x_{n}]]$ and $\hat{B}_{\hat{x}}\simeq K[[x_1,\cdots,x_{n},y_1,\cdots,y_m]].$ This implies that the kernel of the map $\underline{d}: \hat{B}_{\hat{x}} \to \Omega^{1} _{\hat{B}_{\hat{x}}/\underline{A}_{\hat{x}}}$ is $\underline{A}_{\hat{x}},$ and hence the map is injective on $\hat{B}_{\hat{x}}^{\circ}.$ Combined with the above injectivity of the completion map finishes the proof of the lemma. 
\end{proof}

The regulator we are looking for is the map from $B_{2}(A)$ to $\ker(d)$ that is induced from the diagram below.

$$
\xymatrix{ 
\mathbb{Q}[\hat{B}^{\flat}] \ar@{->>}[d] \ar@{-->}[rrddd]  \ar@{->>}[rrr]& & & B_{2}(\hat{B}) \ar@{-->}[dddl] \ar@{->>}[dll] \ar@{-->}[ldd]  \ar@{-->}[d] \ar^{\delta}[r] & \Lambda^2 \hat{B}^{\times}   \ar^{-3 \cdot \log ^{\circ} _{\hat{\tau}} \wedge \underline{d} log }[d]   \\
\mathbb{Q}[A^{\flat}] \ar@{-->}[rrdd] \ar@{->>}[r] & B_2(A)   \ar@{-->}[rdd] & & \hat{J} \ar@{^{(}->}^{\underline{d}}[r] \ar@{->>}[d] & \Omega^1 _{\hat{B}/\underline{A}} \ar@{->>}[d] \\
& & \ker(\underline{d}) \ar@{^{(}->}[r] &\hat{J} /\hat{J}^2 \ar^{\underline{d}}[r] &  (\Omega^1 _{\hat{B}/\underline{A}}/\hat{J})^{\circ} \\
& & \ker(d) \ar@{^{(}->}[r] \ar@{^{(}->}[u] &\hat{J} /\hat{J}^2 \ar^{d}[r] \ar[u]^*[@]{\sim} &  (\Omega^1 _{\hat{B}}/\hat{J})^{\circ} \ar@{->>}[u]
}
$$
We explain this in detail below. The map $\delta$ sends $[a+\tilde{\alpha}] , $ with $a \in \underline{A}$ and $\tilde{\alpha} \in \hat{I},$  to  $(a-1) (1+\frac{\tilde{\alpha}}{a-1}) \wedge a (1+\frac{\tilde{\alpha}}{a}) \in \Lambda ^{2} \hat{B} ^{\times}.$ The image of this element under   
 $\log^{\circ} _{\hat{\tau}}\wedge \underline{d}log $ is 
\begin{eqnarray*}
\frac{1}{2}\frac{\tilde{\alpha}^2 d \tilde{\alpha}}{(a-1)^2a^2}+O(\tilde{\alpha}^3)d \tilde{\alpha}.
\end{eqnarray*}
There is a unique element $$-\frac{1}{2}\frac{\tilde{\alpha}^3}{(a(a-1))^2}+O(\tilde{\alpha}^4) \in \hat{J}$$ whose image under $\underline{d}$ is exactly
$$
-3\cdot (\log ^{\circ} _{\hat{\tau}} \wedge \underline{d} log) (\delta(\{a+\tilde{\alpha} )).
$$
This element maps to $-\frac{1}{2}\frac{\tilde{\alpha}^3}{(a(a-1))^2} \in \hat{J}/\hat{J}^2,$ which lies in $\ker(d)$ since $\tilde{\alpha}^2 \in \hat{I}^2 \subseteq \hat{J}.$ This defines a map 
$$
B_{2}(\hat{B}) \to \ker(d).
$$ 
By the definition of the Bloch group,  the diagram 
$$
\xymatrix{
\mathbb{Q}[\hat{B}^{\flat}] \ar@{->>}[d]   \ar@{->>}[r]&  B_{2}(\hat{B}) \ar@{->>}[d] \\
\mathbb{Q}[A^{\flat}] \ar@{->>}[r]& B_{2}(A) 
}
$$
is co-cartesian. Therefore, in order to prove that the map from $B_{2}(\hat{B})$ to $\ker(d)$ factors through the projection $B_{2}(\hat{B}) \to B_{2}(A),$ it is necessary and sufficient to prove that its composition with $\mathbb{Q}[\hat{B}^{\flat}] \to B_{2}(\hat{B})$ factors through the projection $\mathbb{Q}[\hat{B}^{\flat}] \to \mathbb{Q}[A^{\flat}].$ By the formula for the map above, this boils down to showing that if $\alpha \in I$ and $\tilde{\alpha}$ and $\tilde{\beta}$ are two different liftings to $\hat{I},$ then the reductions of $\tilde{\alpha} ^{3}$ and $\tilde{\beta} ^{3}$ in $\hat{J}/\hat{J}^2$  are the same. This follows from $\tilde{\beta} ^{3}-\tilde{\alpha}^{3}=(\tilde{\beta} -\tilde{\alpha})(\tilde{\beta}^2+\tilde{\beta}\tilde{\alpha}+\tilde{\alpha}^2) \in \hat{J}\cdot \hat{I}^2 \subseteq \hat{J}^2.$

In order to show that the map is independent of the presentation, and thus defines a map to $D_{1}(A)$, we can argue exactly as in the last paragraph of \textsection \ref{First construction}.

\section{Map from cyclic homology to the Bloch group}\label{loc-bloch} 
\subsection{Comparison of cyclic homology and Andr\'{e}-Quillen homology}  Suppose that $A$ is as in the previous section. We will, in fact, assume that  $A=\underline{A}\oplus I,$ using the given splitting $\tau.$  First we recall that cyclic homology can be computed in terms of Andr\'{e}-Quillen homology. 

\begin{lemma}
With $A=\underline{A}\oplus I$ as above with $\underline{A}$ smooth over $k$, we have 
$$
HC_{2} ^{\circ}(A)^{(1)}=D_{1}(A). 
$$
\end{lemma}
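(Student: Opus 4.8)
The plan is to compute $HC_2^\circ(A)^{(1)}$ by means of the $\lambda$-decomposition of Hochschild and cyclic homology, which is available because we work over $\mathbb{Q}$ and $A$ is commutative, together with Connes' SBI sequence and the identification of the weight-one part of Hochschild homology with Andr\'{e}--Quillen homology. Recall from \cite{lod} that there are natural decompositions $HH_n(A) = \bigoplus_i HH_n^{(i)}(A)$ and $HC_n(A) = \bigoplus_i HC_n^{(i)}(A)$, that in these decompositions the SBI maps are graded with $I$ weight-preserving, $B$ raising weight by one and $S$ lowering it by one, and that $HH_n^{(1)}(A) \cong D_{n-1}(A)$. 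I will also use the elementary weight-zero computation $HC_0^{(0)}(A) = A$ and $HC_n^{(0)}(A) = 0$ for $n \geq 1$.

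First I would isolate the weight-one strand of the SBI sequence in low degrees. Using $HC_1^{(0)}(A) = 0$, $HH_2^{(1)}(A) = D_1(A)$, $HC_0^{(0)}(A) = A$ and $HH_1^{(1)}(A) = \Omega^1_{A}$, together with the fact that the relevant $B$ is the de Rham differential $d\colon A \to \Omega^1_A$, this strand reduces to the exact sequence
$$
0 \to D_1(A) \to HC_2^{(1)}(A) \xrightarrow{S} A \xrightarrow{d} \Omega^1_{A}.
$$
Since $A = \underline{A}\oplus I$ is split, each term carries a compatible decomposition into its reduction and its infinitesimal part, and this sequence restricts to the infinitesimal parts. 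Here $HH_2^{(1),\circ}(A) = D_1(A)$, because $\underline{A}$ is smooth over $\mathbb{Q}$ so that $D_1(\underline{A}) = 0$; moreover $HC_0^{(0),\circ}(A) = I$ and $HH_1^{(1),\circ}(A) = (\Omega^1_{A})^\circ$, the connecting map still being $d$. Thus the infinitesimal strand reads
$$
0 \to D_1(A) \to HC_2^{\circ}(A)^{(1)} \xrightarrow{S} I \xrightarrow{d} (\Omega^1_{A})^\circ.
$$

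The remaining point, and the one that pins the answer down to $D_1(A)$ rather than to an extension of it by $\ker(d|_I)$, is to show that $d\colon I \to (\Omega^1_{A/\mathbb{Q}})^\circ$ is injective; granting this, $S$ vanishes on $HC_2^\circ(A)^{(1)}$ and the first map becomes the desired isomorphism. To prove injectivity I would exhibit a left inverse of $d|_I$: the projection $D\colon A = \underline{A}\oplus I \to I$ is a $\mathbb{Q}$-derivation into the $A$-module $I$ (whose $A$-action factors through $\underline{A}$ since $I^2 = 0$, which is exactly what makes the Leibniz rule hold), so by the universal property of $\Omega^1_{A/\mathbb{Q}}$ it factors as $D = \phi \circ d$ with $\phi$ an $A$-linear map satisfying $\phi(dx) = x$ for all $x \in I$. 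Hence $d|_I$ is injective, completing the identification $HC_2^\circ(A)^{(1)} = D_1(A)$. The only genuinely delicate part of the argument is the weight bookkeeping in the SBI sequence and the input $HH_n^{(1)} = D_{n-1}$; once these are in place the infinitesimal vanishing of the weight-zero contribution is immediate from the derivation trick above.
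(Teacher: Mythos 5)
Your argument is correct, and it reaches the paper's conclusion through the same general toolbox (Connes' periodicity sequence, the $\lambda$-decomposition with its weight bookkeeping, and the identification $HH_n^{(1)}\cong D_{n-1}$), but the decisive step is handled differently. The paper observes that $A=\underline{A}\oplus I$ is a graded algebra and invokes the degeneration of Connes' sequence for graded algebras (Loday, Theorem 4.1.13), which makes $S$ vanish on the entire reduced part and yields the short exact sequences $0\to HC_{n-1}^{\circ}(A)\to HH_n^{\circ}(A)\to HC_n^{\circ}(A)\to 0$; the isomorphism in degree $2$, weight $1$, then drops out from $HC_1^{\circ}(A)^{(0)}=0$. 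You instead keep the full long exact sequence and kill $S$ only where it is needed, by showing that the next map $B=d\colon I\to(\Omega^1_A)^{\circ}$ is injective via the observation that the projection $A=\underline{A}\oplus I\to I$ is a $\mathbb{Q}$-derivation (using $I^2=0$) and hence furnishes an $A$-linear left inverse of $d|_I$. Your route is more elementary and self-contained at that step --- it avoids citing the graded-degeneration theorem and makes visible exactly which piece of $S$ must vanish --- at the cost of a slightly longer computation; the paper's route is shorter and gives the vanishing of $S^{\circ}$ in all degrees at once, which is more information than the lemma requires. Both arguments conclude identically by using smoothness of $\underline{A}$ to identify $D_1^{\circ}(A)$ with $D_1(A)$.
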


\begin{proof}
Since Connes' long exact sequence degenerates for graded algebras, we have the exact sequence \cite[Theorem 4.1.13]{lod}
$$
0\to HC_{n-1} ^{\circ}(A)\to HH_{n} ^{\circ}(A) \to HC_{n} ^{\circ}(A)\to 0.
$$
Using the fact that this exact sequence is compatible with  $\lambda$-decomposition \cite[Theorem 4.6.9]{lod} and the fact that $HC_{1} ^{\circ}(A)^{(0 )}=0,$  we deduce the isomorphism
$
 HH_{2} ^{\circ}(A)^{(1)} \xrightarrow{\sim} HC_{2} ^{\circ}(A) ^{(1)}.
$
Finally, by \cite[Proposition 4.5.13]{lod}, this part of  Hochschild homology coincides with the Andr\'{e}-Quillen homology: 
$$
 HH_{2} ^{\circ}(A)^{(1)}\simeq D_{1} ^{\circ}(A).
$$
By the smoothness assumption $D_{1}(\underline{A})=0$ and we have the statement. 
\end{proof}

\subsection{Map from cyclic homology to the Bloch group.} In order to prove the injectivity of the regulator,  we will have to first recall the natural map from cyclic homology to the Bloch group. 

\subsubsection{Review of the map from cyclic homology.} 

By \cite{unv1} we have a map, 
$$
HC_{2} ^{\circ}(A)
^{(1)} \to B_{2} ^{\circ}(A).
$$
Composing with $\ell i_{2},$ the branch of the dilogarithm corresponding to the above splitting of $A,$ we obtain a map $HC_{2} ^{\circ}(A)^{(1)} \to D_{1}(A).$ First we would like to give an example which shows that this map is not  always an isomorphism. In fact, in later sections this example will serve as a counterexample to various other statements such as  the generalization of the homotopy map. Because of this example, we will have to add the  assumption that $I$ is a free $\underline{A}$-module in order to prove the injectivity of the regulator.  

\begin{example}\label{example}
By the formula for $\ell i_{2},$ for $a \in \underline{A},$ $\alpha \in A$ and $\lambda \in \mathbb{Q},$ we have $\ell i_{2}(a+\lambda \alpha)=\lambda ^3 \ell i_{2}(a+\alpha). $ Therefore, if $t_{\lambda}:A \to A$ denotes the ring homomorphism defined by $t_{\lambda}(a+\alpha):=a+\lambda \alpha,$ and $t_{\lambda,*}$ is the induced map on $B_{2}(A),$ then $\ell i_{2}(t_{\lambda,*}(q))=\lambda^3\ell i_{2}(q),$ for any $q\in B_{2}(A).$

Let $\underline{A}=\mathbb{Q}[x],$  and $A:=\underline{A}[t]/(xt,t^2)$ denote the square-zero infinitesimal thickening of $\underline{A}.$ Letting $B:=\underline{A}[t]$ and $J:=(xt,t^2),$ we have a presentation $0\to J\to B\to A \to 0,$ which can be used to compute $D_{1}(A)$ as
$$
D_{1}(A)=\ker(J/J^2 \to \Omega^1 _{B}/J).
$$
The element $xt^2$  in $\ker(J/J^2 \to \Omega^{1} _{B}/J)=D_{1}(A)$ is non-zero and corresponds to a multiple of $t\otimes x \otimes t$ in $HC_{2}^{\circ}(A)^{(1)}.$ In particular, this element in cyclic homology is nonzero. Let $q$ be its image in $B_{2}(A).$ Note that $t_{\lambda,*}(q)=\lambda^2q.$ Comparing with the above formula, we obtain that $\lambda ^3 \ell i_{2}(q)=\lambda ^2 \ell i_{2}(q).$ Therefore, $\ell i_{2}(q)=0$ and $\ell i_{2}$ is not injective on $HC_{2}^{\circ} (A) ^{(1)}.$ 

\end{example}

\subsubsection{Injectivity of the regulator on cyclic homology}\label{smooth over artin}

Suppose further in this section that $I$ is a free $\underline{A}$-module. Note that since $I^2=0,$ this is equivalent to assuming that the conormal module associated to the projection $A \to \underline{A}$ be a free $\underline{A}$-module. Using a splitting, without loss of generality, we are in the above situation $A=\underline{A}\oplus I,$ with $I$ a {\it free} $\underline{A}$-module. 

Let $S^{ \bigcdot}(I)$ denote the symmetric algebra of $I,$ $\tilde{I}$ the augmentation ideal and $J:=\tilde{I}^2.$  Since $S^{ \bigcdot}(I)$ is smooth over $\underline{A}$ and the sequence $0 \to J \to S^{ \bigcdot}(I) \to A\to 0$ is a presentation  of $A,$  $D_{1}(A)$ is given by 
$$
\ker(J/J^2\to \Omega^{1} _{S^{ \bigcdot}(I)}/J)=\tilde{I}^{3}/\tilde{I}^4\xleftarrow{\sim} S^3(I).
 $$
 Therefore,  we have an isomorphism $HC_{2} ^{\circ}(A)^{(1)} \simeq  S^3(I).$ 
 
 On the other hand, there is another natural map defined as follows. Let $\sigma$ be the 3-cycle $(123),$ and $C_{3}:=\langle \sigma \rangle $  the subgroup of $S_{3}$ generated by $\sigma.$ $C_{3}$ naturally acts on $I^{\otimes 3},$ with the  rule that  $\sigma$ sends $a\otimes b \otimes c$ to   $c \otimes a\otimes b.$ Let $I^{\otimes 3}/(1-\sigma)$ denote the group of co-invariants with respect to this action. The natural map $I^{\otimes 3}/(1-\sigma)\to HC_{2} ^{\circ}(A)^{(1)}$ factors via the projection $I^{\otimes 3}/(1-\sigma) \twoheadrightarrow I^{\otimes _{\underline{A}}3}/(1-\sigma).$ Therefore, we obtain a map 
 $$
 I^{\otimes _{\underline{A}}3}/(1-\sigma)\to HC_{2} ^{\circ}(A)^{(1)}.
 $$
Composing with the map 
$$
S^{3}(I)\hookrightarrow I^{\otimes_{\underline{A}} 3}  /(1-\sigma)
$$
which sends $a\otimes b \otimes c$ to $\frac{1}{2} (a\otimes b \otimes c+a\otimes c \otimes b),$ we obtain a map 
$$
S^{3}(I) \to HC_{2} ^{\circ}(A)^{(1)}.
$$
This map, being a non-zero constant multiple of the above map, is an isomorphism. 

Let us now look at the composition 
$$
S^{3}(I) \xrightarrow{\sim} HC_{2} ^{\circ}(A)^{(1)}\to B_{2}(A)\to D_{1}(A)\simeq S^{3}(I).
$$

\begin{proposition}\label{reg-on-cyclic}
The above composition $S^{3}(I) \to S^{3}(I)$ is multiplication by 3. 
\end{proposition}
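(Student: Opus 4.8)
The plan is to trace a single generator $a \otimes b \otimes c \in S^3(I)$ (symmetrized) explicitly through the composition
$$
S^{3}(I) \xrightarrow{\sim} HC_{2} ^{\circ}(A)^{(1)}\to B_{2}(A)\xrightarrow{\ell i_2} D_{1}(A)\simeq S^{3}(I),
$$
and show it lands on $3(a \otimes b \otimes c)$. First I would make all three identifications concrete. On the right, the isomorphism $D_1(A) \simeq \tilde I^3/\tilde I^4 \simeq S^3(I)$ coming from the presentation $0 \to J \to S^{\bigcdot}(I) \to A \to 0$ with $J = \tilde I^2$ is simply the obvious one, so the target generator is the class of $abc \in \tilde I^3/\tilde I^4$. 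On the left, the isomorphism $S^3(I) \xrightarrow{\sim} HC_2^\circ(A)^{(1)}$ is the explicit one just constructed, sending $a \otimes b \otimes c$ (up to the fixed nonzero constant) to the class represented by a symmetric combination of tensors such as $a \otimes b \otimes c$ in $I^{\otimes_{\underline A} 3}/(1-\sigma)$.

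**Pinning down the middle maps.**
The essential input is the explicit description of the map $HC_2^\circ(A)^{(1)} \to B_2(A)$ from \cite{unv1}, together with the formula for $\ell i_2$ established in \textsection\ref{First construction}. The cleanest route is to reduce to a single pure tensor: it suffices to compute the composition on an element of the form $\alpha^{\otimes 3}$ for $\alpha \in I$, since by multilinearity and the polarization identity $6\,abc = \sum_{\pm}\pm(\cdots)^{\otimes 3}$ (the standard expression of a symmetric product as an alternating sum of cubes) the general case follows formally from the cubic case. Working with $\alpha + a \in A = \underline A \oplus I$, I would use the formula $\ell i_2(a + \alpha) = -\tfrac12 \tilde\alpha^3 / (a(a-1))^2$ from \textsection\ref{First construction}. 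The key observation, already exploited in Example~\ref{example}, is the cubic homogeneity $\ell i_2(a + \lambda\alpha) = \lambda^3 \ell i_2(a+\alpha)$, which tells us that the whole composite is a well-defined \emph{cubic} map $S^3(I) \to S^3(I)$; combined with linearity forced by the cyclic-homology source, it must be a scalar, and the task reduces to computing that scalar.

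**Extracting the scalar.**
To extract the constant, I would evaluate the round trip on the representative coming from the symbol $\delta([a+\tilde\alpha]) = (1-a)(1+\tfrac{\tilde\alpha}{a-1}) \wedge a(1+\tfrac{\tilde\alpha}{a})$, track the image of the corresponding class in $HC_2^\circ(A)^{(1)}$ under the \cite{unv1} map into $B_2(A)$, and then apply $\ell i_2$. Here the factor of $-3$ built into the conceptual construction of \textsection\ref{First construction} (the map $-3 \cdot \log^\circ_{\hat\tau} \wedge \underline d\log$) and the normalizing constant $-\tfrac12$ in the dilogarithm branch are the two places where explicit numerical factors enter. Matching these against the factor $\tfrac12(a\otimes b\otimes c + a\otimes c\otimes b)$ in the inclusion $S^3(I) \hookrightarrow I^{\otimes_{\underline A} 3}/(1-\sigma)$, and against the nonzero constant relating the two descriptions of $S^3(I) \xrightarrow{\sim} HC_2^\circ(A)^{(1)}$, should produce the net factor $3$.

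**Expected obstacle.**
The main difficulty is bookkeeping rather than conceptual: I must carry the several competing normalization constants (the $-\tfrac12$ in $\ell i_2$, the $-3$ in the integration map, the $\tfrac12$ in the symmetrization, and the unspecified nonzero multiple hidden in the cyclic-homology isomorphism) through consistently, so that they combine to exactly $3$ and not some other rational. The most error-prone single step is the identification of the image in $B_2(A)$ of the explicit cyclic cycle $\alpha \otimes a \otimes \alpha$ (or its symmetric analog) under the map from \cite{unv1}, since this is where the source and target normalizations must be reconciled. I would therefore verify the scalar on the explicit test case already present in Example~\ref{example} — where $D_1(A)$ contains the nonzero class $xt^2$ pulled back from $t \otimes x \otimes t$ — as an independent sanity check that the constant is indeed $3$.
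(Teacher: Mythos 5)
Your overall strategy --- push a generator of $S^3(I)$ through the composite and keep track of the normalizations --- is the same in spirit as the paper's, but the proposal stops exactly where the actual proof begins. The map $HC_2^\circ(A)^{(1)}\to B_2(A)$ from \cite{unv1} is not given by a closed formula on cyclic tensors; it is defined through the identification of cyclic homology with Lie algebra homology of matrices. Accordingly, the paper's proof realizes $a\otimes b\otimes c$ as the cycle $\beta_{abc}=ae_{12}\wedge be_{23}\wedge ce_{31}$ in $C_*(\mathfrak{gl}_3(A))$, shows by explicit boundary computations that its image agrees with that of $\gamma_{abc}=ae_{12}\wedge be_{21}\wedge ce_{11}$ in $C_*(\mathfrak{gl}_2(A))$, lifts $\gamma_{abc}$ to $C_*(\mathfrak{gl}_2(S^{\bigcdot}(I)))$, applies the choice of vectors $v_1,v_2,v_3$ and the map $-3\log^\circ\wedge\underline{d}\log$, and finally integrates to land on $3\tilde a\tilde b\tilde c$ in $\tilde I^3/\tilde I^4$. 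None of this can be replaced by the formula $\ell i_2(a+\alpha)=-\tfrac12\tilde\alpha^3/(a(a-1))^2$ alone, because the cyclic class does not arrive in $B_2(A)$ as a single generator $[x]$ on which that formula could be evaluated. You flag this step as ``the most error-prone'' but give no method for carrying it out; that is the gap. (The polarization reduction to cubes is acceptable bookkeeping, though the remark that the composite ``must be a scalar'' is unjustified as stated --- linearity alone does not force an endomorphism of $S^3(I)$ to be scalar --- and is unnecessary once the value on all cubes is computed.)

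A second, concrete error: Example \ref{example} cannot serve as a sanity check for the constant $3$. There $\underline{A}=\mathbb{Q}[x]$ and $I=(t)$ with $xt=0$, so $I$ is \emph{not} free over $\underline{A}$, and the point of that example is that the composite annihilates the nonzero class coming from $t\otimes x\otimes t$. It is the counterexample motivating the freeness hypothesis of \textsection\ref{smooth over artin}, not a confirmation that the scalar is $3$; using it as you propose would appear to contradict the proposition rather than verify it.
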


\begin{proof}
 The arguments in this proof are  analogous to those of \cite[\textsection 4]{unv1}. We need to compute the image of the element $a\otimes b \otimes c$ in $I^{\otimes 3}$ under the above composition. This element is mapped to the 3-cycle $\beta_{abc}:=ae _{12}\wedge b e_{23} \wedge c e_{31}$ in the homology of $\mathfrak{gl}_{3}(A).$ In $C_{*}(\mathfrak{gl}_3(A)),$ we have 
$$
d(e_{13} \wedge ae_{12}\wedge be_{21}\wedge ce_{31})=-\beta_{abc}+\gamma_{abc}-ae _{12}\wedge b e_{21} \wedge c e_{33},
$$
where $\gamma_{abc}:=ae _{12}\wedge b e_{21} \wedge c e_{11}.$ We claim that the image of the term $ae _{12}\wedge b e_{21} \wedge c e_{33}$ is equal to 0.
Since 
$$
d(e_{12} \wedge ae_{11}\wedge be_{21}\wedge ce_{33})=-ae _{12}\wedge b e_{21} \wedge c e_{33}+ae _{11}\wedge b e_{11} \wedge c e_{33}-ae _{11}\wedge b e_{22} \wedge c e_{33},
$$
it suffices to show that the images of $ae _{11}\wedge b e_{11} \wedge c e_{33}$ and $ae _{11}\wedge b e_{22} \wedge c e_{33}$ are 0. In order to see this, note that  these elements can be lifted to cycles $\tilde{a}e _{11}\wedge \tilde{b} e_{11} \wedge \tilde{c} e_{33}$ and $\tilde{a}e _{11}\wedge \tilde{b} e_{22} \wedge \tilde{c} e_{33}$ in 
$C_{*}(\mathfrak{gl}_3(S^{\bigcdot}(I))),$ which immediately implies that their images in $S^{3}(I)$ are 0 as in the proof of \cite[Lemma 4.2.1]{unv1}. Therefore, the image of  $\beta_{abc}$ coincides with that of $\gamma_{abc}.$ 

Note that $\gamma_{abc}$ is a 3-cycle in $C_{*}(\mathfrak{gl}_2(A).$ In order to compute its image, we start with a lifting of this element to an element in $C_{*}(\mathfrak{gl}_2(S^{\bigcdot}(I))).$ Let $\tilde{\gamma}_{abc}:=\tilde{a}e_{12}\wedge \tilde{b} e_{21} \wedge \tilde{c} e_{11}$ be such a lifting. Its boundary is equal to 
\begin{eqnarray}\label{elt}
d(\tilde{\gamma}_{abc})=\tilde{a}\tilde{b}e_{11} \wedge \tilde{c}e_{11} -\tilde{a}\tilde{b}e_{22} \wedge \tilde{c}e_{11}+\tilde{a}\tilde{c}e_{12} \wedge \tilde{b}e_{21}-\tilde{a}e_{12} \wedge \tilde{b}\tilde{c}e_{21}.
\end{eqnarray}
By the choice of the vectors $v_1=(1,1), \, v_2=(0,1)$ and $v_3=(1,0),$ we deduce as in \cite[Lemma 4.2.4]{unv1} that the image of $\tilde{a}\tilde{b}e_{11} \wedge \tilde{c}e_{11} $ in $\Lambda^2 \hat{S}^{\bigcdot}(I) ^{\times}$ is equal to 0. Note that after obtaining the image of $d(\tilde{\gamma}_{abc})$ in $\Lambda^2 \hat{S}^{\bigcdot}(I) ^{\times},$ we apply $-3 \log ^{\circ} \wedge \underline{d} log $ to get an element in $\Omega^{1}_{\hat{S}^{\bigcdot}(I)/\underline{A}}.$ 

In order to compute the image of $\gamma_{abc}$ in  $S^{3}(I)=\tilde{I}^3/\tilde{I}^4,$
we need to find the element in  $\hat{S}^{\bigcdot}(I)$ which maps to the image of this element  in $\Omega^{1}_{\hat{S}^{\bigcdot}(I)/\underline{A}}$ and then reduce it modulo $\tilde{I}^4.$

First let us study the images of terms of the type $\alpha e_{11}\wedge \beta e_{22},$ with $\alpha \in \tilde{I}$ and $\beta \in \tilde{I}^2.$ This is mapped to the element $$3 (\log ^{\circ} \wedge \underline{d} log)(\exp(\alpha)\wedge \exp (\beta))+\tilde{I}^3\underline{d}(\tilde{I})=3(\alpha \underline{d}\beta-\beta \underline{d} \alpha)+\tilde{I}^3\underline{d}(\tilde{I}),$$
in $\Omega^{1} _{\hat{S}^{\bigcdot}(I)/\underline{A}}/\tilde{I}^3\underline{d}(\tilde{I}),$ by the same computation in \cite[Lemma 4.2.6]{unv1}. 

With the same $\alpha$ and $\beta,$ $\alpha e_{12}\wedge \beta e_{21}$ is mapped to 
$$-3 (\log ^{\circ} \wedge \underline{d} log)(\exp(-\alpha)\wedge \exp (-\beta))+\tilde{I}^3\underline{d}(\tilde{I})=-3(\alpha \underline{d}\beta-\beta \underline{d} \alpha)+\tilde{I}^3\underline{d}(\tilde{I})$$ 
in $\Omega^{1} _{\hat{S}^{\bigcdot}(I)/\underline{A}}/\tilde{I}^3\underline{d}(\tilde{I}),$
by  the same computation as in \cite[Lemma 4.2.7]{unv1}.

Applying the above formulas to the expression (\ref{elt}) of  $d(\tilde{\gamma}_{abc}),$ we deduce that the image of $d(\tilde{\gamma}_{abc})$ in  $\Omega^{1} _{\hat{S}^{\bigcdot}(I)/\underline{A}}/\tilde{I}^3\underline{d}(\tilde{I})$ is $-3$ times 
$$
-\tilde{c}\cdot \underline{d}(\tilde{a}\tilde{b})+\tilde{a}\tilde{b}\cdot \underline{d}(\tilde{c})+\tilde{a}\tilde{c}\cdot \underline{d}(\tilde{b})-\tilde{b}\cdot \underline{d}(\tilde{a}\tilde{c})-\tilde{a}\cdot \underline{d}(\tilde{b}\tilde{c})+\tilde{b}\tilde{c}\cdot \underline{d}(\tilde{a})+\tilde{I}^3\underline{d}(\tilde{I})=-\underline{d}(\tilde{a}\tilde{b}\tilde{c})+ \tilde{I}^3\underline{d}(\tilde{I}).
$$
By reducing  the anti-derivative of this element modulo $\tilde{I}^4,$ we conclude that the image of $\gamma_{abc}$ in $\tilde{I}^3/\tilde{I}^4$ is $3\tilde{a}\tilde{b} \tilde{c}.$ This finishes the proof of the proposition.
\end{proof}

\begin{corollary}\label{localiso}
Suppose that $A$ is a local  $k$-algebra with a square-zero ideal $I$ such that $I$ is free over $\underline{A}=A/I$ and $\underline{A}/k$ is smooth.  If $\tau:\underline{A} \to A$ is a splitting of the canonical  projection, then the composition 
$$
HC_{2}^\circ(A) ^{(1)}\to B_{2} ^{\circ}(A) \xrightarrow{\ell i_{2,\tau}}  D_{1}(A)  
$$
is an isomorphism. 
\end{corollary}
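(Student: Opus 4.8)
The plan is to reduce the statement to Proposition \ref{reg-on-cyclic} and then to observe that multiplication by a nonzero rational number is invertible. First I would use the splitting $\tau$ to identify $A$ with $\underline{A}\oplus I$. Since $I$ is free over $\underline{A}$ by hypothesis and $\underline{A}/k$ is smooth, we are then exactly in the setting of \textsection\ref{smooth over artin}, and in particular we have the two isomorphisms $HC_{2}^{\circ}(A)^{(1)}\simeq S^{3}(I)$ and $D_{1}(A)\simeq \tilde{I}^{3}/\tilde{I}^{4}\simeq S^{3}(I)$ constructed there.

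Next I would check that the composition in the statement coincides with the one computed in Proposition \ref{reg-on-cyclic}. The map from cyclic homology recalled from \cite{unv1} factors through the infinitesimal part $B_{2}^{\circ}(A)\subseteq B_{2}(A)$, and $\ell i_{2,\tau}$ on $B_{2}^{\circ}(A)$ is by definition the restriction of $\ell i_{2,\tau}: B_{2}(A)\to D_{1}(A)$. Hence the composite
$$
HC_{2}^{\circ}(A)^{(1)}\to B_{2}^{\circ}(A)\xrightarrow{\ell i_{2,\tau}} D_{1}(A)
$$
is, under the identifications above, literally the self-map of $S^{3}(I)$ appearing in Proposition \ref{reg-on-cyclic}.

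Finally, that proposition identifies this self-map with multiplication by $3$. Because every group in sight is a $\mathbb{Q}$-vector space --- by the standing convention that every Bloch complex, and therefore everything built from it, is tensored with $\mathbb{Q}$ --- multiplication by $3$ is an isomorphism, which gives the claim. I do not expect any real obstacle here beyond Proposition \ref{reg-on-cyclic} itself: the only points needing care are that the reduction to the split free case via $\tau$ is harmless, that the various identifications of \textsection\ref{smooth over artin} are compatible with $\tau,$ and that the locality hypothesis on $A$ is precisely what supplies the map from cyclic homology to the Bloch group of \cite{unv1}.
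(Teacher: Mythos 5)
Your proposal is correct and follows essentially the same route as the paper: reduce to the split case $A=\underline{A}\oplus I$ via $\tau$, then invoke Proposition \ref{reg-on-cyclic} to identify the composite with multiplication by $3$ on $S^{3}(I)$, which is invertible since all groups are $\mathbb{Q}$-vector spaces. The paper's own proof is a two-line version of exactly this argument.
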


\begin{proof}
Using the splitting $\tau,$ we reduce to the case when $A=\underline{A}\oplus I,$ with $I$ a free $\underline{A}$-module. The statement is then a consequence of the above proposition.  
\end{proof}

\section{Homotopy map} \label{homotopy section}

In this section, we would like to compare the different branches of the dilogarithm corresponding to different splittings. First, we give an example that this is not possible for all square-zero extensions. 

\begin{example}\label{example2}
Assume the same notation as in Example \ref{example}. Let $\tau$ be the  splitting of $A$ corresponding to the direct sum decomposition $A=\underline{A}\oplus I,$ where $I=(t),$ and $\sigma:\underline{A} \to A$ the splitting such that $\sigma(x)=x+t.$ Let $q$ be the image of the element $t\otimes x \otimes t \in HC_{2} ^{\circ}(A)^{(1)}$ in $B_{2} ^{\circ} (A).$ We have seen in Example \ref{example} that $\ell i_{2,\tau}(q)=0.$ In order to compute $\ell i_{2,\sigma}(q),$ by an elementary transport of structure, we see that  $\ell i_{2,\sigma}(q)=\ell i_{2,\tau}(q'),$ where $q'$ is the image of $t \otimes (x-t)\otimes t \in HC_{2} ^{\circ} (A)^{(1)}$ in $B_{2} ^{\circ}(A).$ If $q''$ is the image of $t\otimes t \otimes t \in HC_{2} ^{\circ} (A) ^{(1)}$  in $B_{2}^{\circ} (A),$ then $q'=q-q''.$  By Proposition \ref{reg-on-cyclic},  we have  $\ell i_{2,\tau}(q')=\ell i_{2,\tau}(q)-\ell i_{2,\tau}(q'')=-\ell i_{2,\tau}(q'')=-3t^3 \in (t^3)/(t^4).$ In particular, since the last element is non-zero, $\ell i_{2,\tau}(q)\neq \ell i_{2,\sigma}(q).$ Note that that $q \in \ker (\delta),$ but the different branches of the dilogarithm do {\it not} necessarily have the same value on $q.$ We will see below that this cannot happen if $I$ is a free $\underline{A}$-module  as in \textsection \ref{smooth over artin}.
\end{example}

Let $A_i,$ for $i=1,\,2,$ be  $k$-algebras, with  square-zero ideals $I_i,$ as in \textsection \ref{smooth over artin}. Suppose that  $f:A_1 \to A_2$  is a $k$-algebra homomorphism and that 
$$
\tau_1: \underline{A}_1 \to A_1
$$
and 
$$
\tau_2: \underline{A}_2 \to A_2,
$$
are splittings of $k$-algebras. We do {\it not} require that $f$ be compatible with the $\tau_i$'s. Namely, if $\underline{f}: \underline{A}_1 \to \underline{A}_{2}$ denotes the map induced by  $f,$  then $f \circ \tau_1$ is not necessarily equal to $\tau_2  \circ\underline{f}.$

 We would like to define a map 
$$
h_f(\tau_1, \tau_2)
: F((\Lambda^{2}A_1^{\times})^{\circ}) \to D_1(A_2), 
$$
with the property that, for every $\alpha \in B_{2} ^{\circ}(A_1),$ we have 
\begin{align}\label{eqhom}
\ell i_{2,\tau_2 } (f(\alpha))-f_{*}(\ell i_{2,\tau_1 } (\alpha))=h_f(\tau_1,\tau_2)(\delta(\alpha)).
\end{align}
This proved in Propostion \ref{homprop} below. This map is a measure of  the defect between $f \circ \tau_1$ and $\tau_2  \circ\underline{f}.$ In the sense that, $h_f(\tau_1, \tau_2)$ is 0 if $f \circ \tau_1=\tau_2  \circ\underline{f}.$ An explicit formula for this map is given in Proposition \ref{equation h}.

\subsection{Construction of $h_f(\tau, \sigma)$}

In this section, we will construct the above mentioned homotopy. Let $B_{i}$ denote the symmetric algebra $S^{\bigcdot} _{\underline{A}_i} (I_{i})$ of the free $\underline{A}_i$-module $I_{i}$ and $\hat{B}_{i}$ its completion along the augmentation ideal. Let $\hat{\tau}_i:\underline{A}_{i} \to \hat{B}_i$ denote the structure map. There is a natural surjection $\hat{B}_{i} \to A_{i},$ with kernel $\hat{J}_i,$ which is the square of the augmentation ideal $\hat{I}_{i}.$

\begin{lemma}
There is a map $\hat{f}:\hat{B}_{1} \to \hat{B}_2$ which makes the diagram: 
$$
\xymatrix{ 
 \hat{B}_1 \ar@{->>}[d] \ar^{\hat{f}}[r] & \hat{B}_2 \ar@{->>}[d] \\
A_1 \ar^{f}[r] & A_{2}
}
$$
commute. We do {\it not} require that $\hat{f}$ be compatible with $\hat{\tau}_i,$ in the sense above.  
\end{lemma}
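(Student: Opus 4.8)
The plan is to construct $\hat{f}$ by lifting the composite map $\hat{B}_1 \to A_1 \xrightarrow{f} A_2$ through the surjection $\hat{B}_2 \twoheadrightarrow A_2$, using the smoothness and freeness of the algebras involved. First I would recall that $\hat{B}_1 = \hat{S}^{\bigcdot} _{\underline{A}_1}(I_1)$ is the completion of a symmetric algebra on a free $\underline{A}_1$-module, so it is topologically freely generated over $\underline{A}_1$ by a basis of $I_1$. Concretely, to define a continuous $k$-algebra homomorphism out of $\hat{B}_1$ it suffices to specify where $\underline{A}_1$ goes (i.e.\ to give the structure map) and where each generator $x_j$ of $I_1$ goes, subject to the constraint that the generators land in the augmentation ideal $\hat{I}_2$ so that the map extends continuously to the completion. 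Since we do \emph{not} require compatibility with the splittings $\hat{\tau}_i$, there is no obstruction forcing $\underline{A}_1$ to map via $\hat{\tau}_2 \circ \underline{f}$; we are free to choose any lift of the composite into $\hat{B}_2$.

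The key steps, in order, are as follows. First I would send $\underline{A}_1$ into $\hat{B}_2$: compose $\hat{\tau}_1$'s source $\underline{A}_1 \xrightarrow{\underline{f}} \underline{A}_2 \xrightarrow{\hat{\tau}_2} \hat{B}_2$, which gives a $k$-algebra map $\underline{A}_1 \to \hat{B}_2$; this determines the $\underline{A}_1$-module structure on $\hat{B}_2$ through which $\hat{f}$ will be $\underline{A}_1$-linear. Second, for each basis element $x_j \in I_1$, the element $f(x_j) \in I_2 \subseteq A_2$ (using that $f$ carries $I_1$ into $I_2$) lifts to some $\xi_j \in \hat{I}_2$ under the surjection $\hat{B}_2 \twoheadrightarrow A_2$, because $A_2 = \underline{A}_2 \oplus I_2$ and $\hat{I}_2$ surjects onto $I_2$. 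I would then \emph{define} $\hat{f}$ by declaring $\hat{f}(x_j) := \xi_j$ and extending by the universal property of the (completed) symmetric algebra, using the $\underline{A}_1$-algebra structure from the first step. Since each $\xi_j$ lies in the augmentation ideal $\hat{I}_2$, monomials of high degree in the $x_j$ map into high powers of $\hat{I}_2$, so the map is continuous for the augmentation-adic topologies and extends to the completions.

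Finally I would verify commutativity of the square. By construction $\hat{f}$ followed by the projection $\hat{B}_2 \twoheadrightarrow A_2$ agrees with $f$ followed by the projection $\hat{B}_1 \twoheadrightarrow A_1$ on both the scalars $\underline{A}_1$ (where both composites equal $\underline{f}$ followed by the inclusion $\underline{A}_2 \hookrightarrow A_2$) and on the generators $x_j$ (where both give $f(x_j) \in I_2$); since these generate $\hat{B}_1$ topologically as a $k$-algebra and all maps are continuous $k$-algebra homomorphisms, the two composites $\hat{B}_1 \to A_2$ coincide. This shows $\hat{f}$ descends compatibly, i.e.\ $\hat{f}(\hat{J}_1) \subseteq \hat{J}_2$ and the diagram commutes. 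The main obstacle, and the only place care is genuinely needed, is the continuity/convergence point: one must check that the chosen lifts $\xi_j$ lie in the augmentation ideal (not merely somewhere in $\hat{B}_2$) so that the induced map on the polynomial algebra actually extends to the completion; this is exactly guaranteed by $f(I_1) \subseteq I_2$ together with the surjectivity of $\hat{I}_2 \twoheadrightarrow I_2$, and it is what makes the freeness hypothesis on the $I_i$ essential.
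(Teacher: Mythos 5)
Your construction has a genuine gap at its very first step: you define $\hat{f}$ on the scalars as the composite $\underline{A}_1 \xrightarrow{\underline{f}} \underline{A}_2 \xrightarrow{\hat{\tau}_2} \hat{B}_2$. Composing this with the projection $\hat{B}_2 \twoheadrightarrow A_2$ gives $\tau_2 \circ \underline{f}$, whereas going the other way around the square, $\underline{A}_1 \hookrightarrow \hat{B}_1 \twoheadrightarrow A_1 \xrightarrow{f} A_2$ gives $f \circ \tau_1$. These two maps differ by an $I_2$-valued defect precisely when $f$ is not compatible with the splittings --- which is the case the lemma is explicitly designed to cover (``We do \emph{not} require that $\hat{f}$ be compatible with $\hat{\tau}_i$''). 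So with your definition the square does not commute on $\underline{A}_1$, and your verification sentence ``both composites equal $\underline{f}$ followed by the inclusion $\underline{A}_2 \hookrightarrow A_2$'' silently reinstates the compatibility hypothesis that is being dropped. This is not a cosmetic point: the degree-one component $\hat{f}_1 = \hat{\theta}$ of $\hat{f}$ on $\underline{A}_1$, which measures exactly this incompatibility, is what drives the homotopy $h_f(\tau_1,\tau_2)$ in \textsection\ref{explicit hom}; your $\hat{f}$ would force $\hat{\theta}=0$ identically.

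The paper's proof addresses exactly this point, and it is the one place where smoothness of $\underline{A}_1/k$ --- which you never invoke --- is essential: one lifts the actual composite $\underline{A}_1 \xrightarrow{f\circ\tau_1} A_2 = \hat{B}_2/\hat{I}_2^{\,2}$ along the surjection from $\hat{B}_2$ by formal smoothness, passing through the successive square-zero extensions $\hat{B}_2/\hat{I}_2^{\,n+1} \to \hat{B}_2/\hat{I}_2^{\,n}$ and using completeness of $\hat{B}_2$ to take the limit. The remainder of your argument (sending a basis of the free module $I_1$ to lifts in $\hat{I}_2$ of its $f$-images, extending by the universal property of the symmetric algebra, and checking convergence on the completion) is fine and agrees with the paper; note only that the role of freeness of $I_1$ is to make $S^{\bigcdot}_{\underline{A}_1}(I_1)$ a polynomial algebra on which images of generators may be prescribed freely, rather than to guarantee continuity.
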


\begin{proof}
Since we do not require compatibility with the $\tau_{i}$'s, we will assume, without loss of generality, that $A_{i}=\underline{A}_i \oplus I_{i}$ are split. In this setting,  $f$ need {\it not} map $\underline{A}_{1} \subseteq A_{1}$ into $\underline{A}_{2}\subseteq A_2.$ 
Since $\underline{A}_{1}/k$ is smooth we can lift the map $\underline{A}_{1}\to A_{2}=\hat{B}_{2}/\hat{I}_{2} ^2,$ using successive thickenings, to a map $\underline{A}_{1} \to \hat{B}_{2}.$ We can extend this  to a map $B_{1}\to \hat{B}_{2}$ by sending $\alpha \in I_{1} $ to $f(\alpha) \in I_{2} \subseteq \hat{B}_{2}.$ This factors through the completion to give a map as in the statement of the lemma. 
\end{proof}
The map $\hat{f}$ is not unique. We will first define a map 

$$
h_{\hat{f}}(\hat{\tau}_1,\hat{\tau}_2): F((\Lambda^{2}(\hat{B}_1/\hat{J}_{1} ^2)^{\times})^{\circ}) \to \hat{B}_2^{\circ}/\hat{J}^2 _2.
$$
Let  $F((\Lambda^{2}\hat{B}_1^{\times})^{\circ}) :=\ker((\Lambda^{2} \hat{B}_1 ^{\times})^{\circ} \to (\Omega^{1} _{\hat{B}_1}/d\hat{B}_1)^{\circ} ).$ Note that since $\hat{B}_1$ is not an artin ring, $F((\Lambda^{2}\hat{B}_1^{\times})^{\circ})$ need not coincide with the image of $\delta ^{\circ}.$

 We have the following diagram: 
$$
\xymatrix{ 
 & F((\Lambda^{2}\hat{B}_1^{\times})^{\circ}) \ar^{ \log ^{\circ} _{\hat{\tau}_1} \wedge \underline{d}log}[d] \ar@{-->}[ld] \ar@{->>}[rrr] & & & F((\Lambda^{2}(\hat{B}_1/\hat{J}_1 ^2)^{\times})^{\circ}) \ar[lllldd]\\
 \hat{B}_1^{\circ} \ar@{->>}[d] \ar@{^{(}->}^{\underline{d}}[r] & \Omega^{1} _{\hat{B}_1/\underline{A}_1}\\
  \hat{B}_1 ^{\circ}/\hat{J}^2_{1}.
}
$$
Therefore, given $\tilde{\alpha} \in F((\Lambda^{2}(\hat{B}_1/\hat{J}_1 ^2)^{\times})^{\circ}) , $ we can define 
$$
\underline{d}^{-1}( \log ^{\circ} _{\hat{\tau}_1} \wedge \underline{d} log) (\tilde {\alpha}) \in \hat{B}_1 ^{\circ}/\hat{J}^2_{1}.
$$
We have a similar diagram for $\hat{B}_2$ and $\hat{\tau}_2$ and since $\hat{f}(\tilde{\alpha}) \in F((\Lambda^{2}(\hat{B}_2/\hat{J}_{2}^2)^{\times})^{\circ}),$ 
$$
\underline{d}^{-1}( \log ^{\circ} _{\hat{\tau}_2} \wedge \underline{d} log) (\hat{f}(\tilde {\alpha})) \in \hat{B}_2 ^{\circ}/\hat{J}^2_{2}.
$$
Finally, we let $h_{\hat{f}}(\hat{\tau}_1,\hat{\tau}_2)(\tilde{\alpha})$ to be $-3$ times  the element
$$
\underline{d}^{-1}( \log ^{\circ} _{\hat{\tau}_2} \wedge \underline{d} log) (\hat{f}(\tilde {\alpha})) -\hat{f}(\underline{d}^{-1}( \log ^{\circ} _{\hat{\tau}_1} \wedge \underline{d} log) (\tilde {\alpha})) \in \hat{B}_2 ^{\circ}/\hat{J}^2_{2}.
$$ 
The following diagram 
$$
\xymatrix{
B_{2} ^{\circ}(\hat{B}_1/\hat{J}_1 ^2) \ar@{->>}[r] \ar@{->>}[d]& F((\Lambda^2(\hat{B}_{1}/\hat{J}_{1} ^2)^{\times})^{\circ}) \ar@{->>}[d] \ar^{h_{\hat{f}}(\hat{\tau}_1,\hat{\tau}_2)}[rd]\\
B_{2} ^{\circ}(A_1) \ar[rd]|-{\ell i_{2,\tau_2}\circ f_*-f_{*}\circ \ell i_{2,\tau_1}}  \ar@{->>}[r] & F((\Lambda^2A_1^{\times})^{\circ})\ar@{-->}^{h_{f}(\tau_1,\tau_2)}[d]& \hat{B}_2 ^{\circ}/\hat{J}^2_{2}\\
& D_{1}(A_2) \ar@{^{(}->}[ur]
}
$$
then defines $h_{f}(\tau_1,\tau_{2}).$ 
Let us first show the commutativity of the outer pentagon. 

\begin{lemma}
The two maps from $B_{2} ^{\circ}(\hat{B}_1/\hat{J}_1 ^2)$ to $\hat{B}_2 ^{\circ}/\hat{J}^2_{2}$ in the above diagram are the same.  
\end{lemma}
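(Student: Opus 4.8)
The plan is to unwind the two composites and observe that they are given by literally the same formula. Write $\Psi_i:=\underline{d}^{-1}\circ(\log^{\circ}_{\hat{\tau}_i}\wedge\underline{d}log)$ for the anti-derivative operation sending $F((\Lambda^{2}(\hat{B}_i/\hat{J}_i^2)^{\times})^{\circ})$ into $\hat{B}_i^{\circ}/\hat{J}_i^2$, so that by construction $h_{\hat{f}}(\hat{\tau}_1,\hat{\tau}_2)=-3(\Psi_2\circ\hat{f}-\hat{f}\circ\Psi_1)$, where the first $\hat{f}$ denotes $\Lambda^2\hat{f}$ on wedges and the second the map $\hat{B}_1^{\circ}/\hat{J}_1^2\to\hat{B}_2^{\circ}/\hat{J}_2^2$ induced by $\hat{f}$ (legitimate since $\hat{f}(\hat{J}_1)\subseteq\hat{J}_2$ and $\hat{f}(\hat{J}_1^2)\subseteq\hat{J}_2^2$). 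With this notation the top-right leg of the pentagon sends $\bar{q}\in B_{2}^{\circ}(\hat{B}_1/\hat{J}_1^2)$ first to $\delta(\bar{q})\in F((\Lambda^{2}(\hat{B}_1/\hat{J}_1^2)^{\times})^{\circ})$ and then applies $h_{\hat{f}}$, giving $-3(\Psi_2(\hat{f}(\delta(\bar{q})))-\hat{f}(\Psi_1(\delta(\bar{q}))))$.

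Next I would recall from the second construction of \textsection\ref{reg}, run with $\hat{B}_i/\hat{J}_i^2$ in place of $\hat{B}_i$ (harmless, since $\hat{J}_i^2$ is annihilated throughout), that the assignment $q\mapsto -3\,\Psi_i(\delta(\tilde{q}))$ is well defined on $B_{2}^{\circ}(A_i)$, independent of the chosen lift $\tilde{q}\in B_2(\hat{B}_i/\hat{J}_i^2)$, and equals $\ell i_{2,\tau_i}$. For the bottom-left leg, let $q\in B_{2}^{\circ}(A_1)$ be the image of $\bar{q}$. Since the square relating $\hat{f}$ and $f$ commutes, $\hat{f}_*(\bar{q})\in B_2(\hat{B}_2/\hat{J}_2^2)$ is a lift of $f_*(q)$; moreover $\delta(\hat{f}_*(\bar{q}))=\hat{f}(\delta(\bar{q}))$ already lies in $F((\Lambda^{2}(\hat{B}_2/\hat{J}_2^2)^{\times})^{\circ})$, so by independence of the lift $\ell i_{2,\tau_2}(f_*(q))=-3\,\Psi_2(\delta(\hat{f}_*(\bar{q})))=-3\,\Psi_2(\hat{f}(\delta(\bar{q})))$, the last step being naturality of $\delta$ with respect to $\hat{f}$. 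Likewise $\ell i_{2,\tau_1}(q)=-3\,\Psi_1(\delta(\bar{q}))$.

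It remains to identify $f_*$ on Andr\'{e}-Quillen homology with the map induced by $\hat{f}$. Because $\hat{f}$ is a morphism of the presentations $\hat{B}_i\twoheadrightarrow A_i$ used in \textsection\ref{AQ} to compute $D_1$, the functorial map $f_*:D_{1}(A_1)\to D_{1}(A_2)$ is precisely the restriction of the map $\hat{J}_1/\hat{J}_1^2\to\hat{J}_2/\hat{J}_2^2$ induced by $\hat{f}$; hence $f_*(\ell i_{2,\tau_1}(q))=-3\,\hat{f}(\Psi_1(\delta(\bar{q})))$. Subtracting, the bottom-left leg equals $-3(\Psi_2(\hat{f}(\delta(\bar{q})))-\hat{f}(\Psi_1(\delta(\bar{q}))))$, which is exactly the value of the top-right leg.

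I expect the only genuinely delicate points --- and the real content borrowed from the second construction --- to be the independence of $\ell i_{2,\tau_i}$ of the lift, so that the distinguished lift $\hat{f}_*(\bar{q})$ of $f_*(q)$ may be used, together with the fact that $\hat{f}$ carries $F((\Lambda^{2}(\hat{B}_1/\hat{J}_1^2)^{\times})^{\circ})$ into its $\hat{B}_2$-analog, and the compatibility of $f_*$ on $D_1$ with $\hat{f}$. The remaining identities, namely $\delta\circ\hat{f}_*=\Lambda^2\hat{f}\circ\delta$ and $\hat{f}(\hat{J}_1^2)\subseteq\hat{J}_2^2$, are routine. Granting these, the two legs coincide term by term and the lemma follows.
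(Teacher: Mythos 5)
Your proof is correct and follows the same route the paper intends: the paper's own proof consists only of the (unfinished) remark that the claim follows from the definition of $h_{\hat{f}}(\hat{\tau}_1,\hat{\tau}_2)$, and your argument is precisely the careful unwinding of that definition against the second construction of $\ell i_{2,\tau}$. The points you isolate --- independence of $\ell i_{2,\tau_i}$ of the chosen lift, the fact that $\hat{f}$ carries the relevant $F$-subgroups into one another, and the identification of $f_*$ on $D_{1}$ with the map induced by $\hat{f}$ on $\hat{J}_1/\hat{J}_1^2$ --- are exactly the ingredients needed to make the paper's one-line assertion rigorous.
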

\begin{proof}
The definition of $h_{\hat{f}}(\hat{\tau}_1,\hat{\tau}_2)$ 
\end{proof}

The following proposition then finishes the construction of $h_{f}(\tau_1,\tau_2):$ 

\begin{proposition}\label{homprop}
The map 
$$
\ell i_{2,\tau_2}\circ f_*-f_{*}\circ \ell i_{2,\tau_1}: B_{2} ^{\circ}(A_1) \to D_{1}(A_{2})
$$
factors via $\delta^{\circ}:B_{2} ^{\circ}(A_1) \to F((\Lambda^2A_1^{\times})^{\circ})$ to give $h_{f}(\tau_1,\tau_2).$ 
\end{proposition}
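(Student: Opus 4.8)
The plan is to reduce the whole statement to a single descent property of $h_{\hat{f}}(\hat{\tau}_1,\hat{\tau}_2)$ and then to settle that property by a weight computation. Write $\Phi:=\ell i_{2,\tau_2}\circ f_*-f_*\circ \ell i_{2,\tau_1}$ and let $\pi: F((\Lambda^{2}(\hat{B}_1/\hat{J}_1^2)^{\times})^{\circ})\twoheadrightarrow F((\Lambda^{2}A_1^{\times})^{\circ})$ be the natural surjection in the defining diagram. By the commutativity of the outer pentagon (the preceding lemma), the naturality of $\delta^{\circ}$ (which gives $\pi\circ\delta^{\circ}=\delta^{\circ}\circ(\text{reduction})$), and the surjectivity of $B_{2}^{\circ}(\hat{B}_1/\hat{J}_1^2)\twoheadrightarrow B_{2}^{\circ}(A_1)$ and of $\delta^{\circ}$, the proposition is equivalent to the assertion that $h_{\hat{f}}(\hat{\tau}_1,\hat{\tau}_2)$ factors through $\pi$, i.e. that it vanishes on $\ker(\pi)$. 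Indeed, granting this, let $h_{f}(\tau_1,\tau_2)$ be the induced map; a diagram chase using that $B_{2}^{\circ}(\hat{B}_1/\hat{J}_1^2)\to B_{2}^{\circ}(A_1)$ is onto then gives $\Phi=h_{f}(\tau_1,\tau_2)\circ\delta^{\circ}$, and since $\delta^{\circ}:B_{2}^{\circ}(A_1)\to F((\Lambda^{2}A_1^{\times})^{\circ})$ is onto while $\Phi$ takes values in $D_{1}(A_2)$, the map $h_{f}(\tau_1,\tau_2)$ lands automatically in $D_{1}(A_2)$.

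So the work is the vanishing of $h_{\hat{f}}(\hat{\tau}_1,\hat{\tau}_2)$ on $\ker(\pi)$. Here I would use the freeness of the $I_i$ in an essential way --- this is exactly the hypothesis that fails in Example \ref{example2} --- to write $\hat{B}_i=\hat{S}^{\bigcdot}_{\underline{A}_i}(I_i)$ and to grade everything by the augmentation weight, so that $\hat{B}_i/\hat{J}_i^2=\underline{A}_i\oplus I_i\oplus S^{2}(I_i)\oplus S^{3}(I_i)$ and $D_{1}(A_i)=\hat{I}_i^3/\hat{I}_i^4=S^{3}(I_i)$ is the top weight piece. The maps $\log^{\circ}_{\hat{\tau}_i}$, $\underline{d}\log$ and $\underline{d}$ raise weight in a controlled way, so that $\log^{\circ}_{\hat{\tau}_i}\wedge \underline{d}\log$ has image in weight $\geq 2$. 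The first point is that for $\omega\in\ker(\pi)$ the element $(\log^{\circ}_{\hat{\tau}_1}\wedge\underline{d}\log)(\omega)$ in fact has weight $\geq 3$: its weight-$2$ component is a bilinear expression in the weight-$1$ parts of $\log^{\circ}_{\hat{\tau}_1}$ and $\underline{d}\log$, which are unchanged under reduction modulo $\hat{I}_1^2$ and hence depend only on the image $\bar{\omega}\in(\Lambda^{2}A_1^{\times})^{\circ}$; as $\bar{\omega}=0$, this component vanishes. Consequently the anti-derivative $X_1:=\underline{d}^{-1}(\log^{\circ}_{\hat{\tau}_1}\wedge\underline{d}\log)(\omega)$ lies in the top weight part $S^{3}(I_1)$. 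Since $\hat{f}$ descends to $f$, the element $\hat{f}(\omega)$ also reduces to $0$ modulo $\hat{I}_2^2$, so the same argument gives $X_2:=\underline{d}^{-1}(\log^{\circ}_{\hat{\tau}_2}\wedge\underline{d}\log)(\hat{f}(\omega))\in S^{3}(I_2)$.

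It then remains to prove $X_2=\hat{f}(X_1)$, for then $h_{\hat{f}}(\hat{\tau}_1,\hat{\tau}_2)(\omega)=-3(X_2-\hat{f}(X_1))=0$; this is the main obstacle. Both $X_2$ and $\hat{f}(X_1)$ are concentrated in the top weight $S^{3}(I_2)$, so by the injectivity of $\underline{d}$ on $\hat{B}_2^{\circ}$ established above (and its consequence on $\hat{B}_2^{\circ}/\hat{J}_2^2$ in the range of weights that concerns us) it suffices to check $\underline{d}X_2=\underline{d}\hat{f}(X_1)$ in weight $3$ inside $\Omega^{1}_{\hat{B}_2/\underline{A}_2}$. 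The subtlety is that $\hat{f}$ is not compatible with the splittings, so it does not commute with the relative differentials or with $\log^{\circ}_{\hat{\tau}_i}$ on the nose; the defect, however, always raises weight. I would make this precise by passing to the associated graded of the augmentation filtration, on which $\hat{f}$ induces the graded $\underline{f}$-algebra homomorphism $\hat{S}^{\bigcdot}(f|_{I_1}):\hat{S}^{\bigcdot}(I_1)\to \hat{S}^{\bigcdot}(I_2)$. This graded map \emph{is} compatible with the $\underline{A}_i$-structures, and hence commutes with $\log^{\circ}$, $\underline{d}\log$, $\underline{d}$ and the weight-raising anti-derivative; since $X_1$ and $X_2$ are concentrated in the top weight, computing $\underline{d}X_2$ and $\underline{d}\hat{f}(X_1)$ on the associated graded shows they agree, the correction terms coming from the incompatibility of $\hat{f}$ with the $\hat{\tau}_i$ being pushed into weight $\geq 4$, which vanishes modulo $\hat{J}_2^2$. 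This yields $X_2=\hat{f}(X_1)$, completes the descent, and hence proves the proposition.
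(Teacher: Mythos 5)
Your reduction in the first paragraph is legitimate (modulo the commutativity of the outer pentagon, which the paper also invokes without detailed proof), and your second paragraph is essentially sound: for $\omega\in\ker(\pi)$ the weight-two component of $(\log^{\circ}_{\hat{\tau}_1}\wedge\underline{d}\log)(\omega)$ is a biadditive, antisymmetric function of the weight-$\leq 1$ data only, hence factors through $\bar{\omega}=0$, so the antiderivatives $X_1,X_2$ do land in top weight. The gap is the final step, the claim that $X_2=\hat{f}(X_1)$ because the corrections coming from the incompatibility of $\hat{f}$ with the $\hat{\tau}_i$ are ``pushed into weight $\geq 4$.'' They are not: $\hat{f}_1$ applied to a weight-zero element $a\in\underline{A}_1$ contributes the weight-one term $\hat{f}_1(a)/\hat{f}_0(a)$ to $\log^{\circ}_{\hat{\tau}_2}(\hat{f}(\cdot))$, and $\hat{f}_1$ applied to weight-one elements contributes in weight two; paired with the remaining factors these produce corrections in weight exactly three, which is precisely the top weight surviving modulo $\hat{J}_2^2$. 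Note that your argument at this stage uses only that $X_1$ and $X_2$ are concentrated in top weight, not that $\omega\in\ker(\pi)$. But the paper's own computation in \textsection\ref{Description of hat h} exhibits $\tilde{\gamma}=e^{\alpha}\wedge e^{a\alpha}-\tfrac12 e^{a\alpha^2}\wedge a$ with $X_1=0$, $X_2=\tfrac12\hat{f}_0(\alpha)^2\hat{f}_1(a)\in S^3(I_2)$, and $h_{\hat{f}}(\hat{\tau}_1,\hat{\tau}_2)(\tilde{\gamma})=-\tfrac32\hat{f}_0(\alpha)^2\hat{f}_1(a)\neq 0$; this element satisfies every hypothesis your associated-graded argument actually uses, so that argument proves too much. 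Whether the weight-three corrections cancel when summed over a relation presenting an element of $\ker(\pi)$ is exactly the content that needs proof, and your proposal does not engage with it.

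For comparison, the paper sidesteps this computation entirely: it shows $\ell i_{2,\tau_2}\circ f_*-f_*\circ\ell i_{2,\tau_1}$ kills $\ker(\delta^{\circ})$ by citing that $\ker(\delta^{\circ})$ is the image of $HC_2^{\circ}(A_1)^{(1)}\to B_2^{\circ}(A_1)$ and that, by Proposition \ref{reg-on-cyclic}, every branch $\ell i_{2,\tau}$ restricted to that image is the canonical multiplication-by-$3$ map $S^3(I)\to S^3(I)$ under the natural identifications --- hence independent of $\tau$ and functorial in $f$. If you want to rescue your route, you would need to replace the associated-graded step by an explicit identification of the weight-three correction terms (essentially the formula of Proposition \ref{equation h}) together with a set of generators of $\ker(\pi)\cap F$ on which the cancellation can be checked.
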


\begin{proof}
We need to show that elements in the kernel of $\delta^{\circ}$ are mapped to 0 by the above map.  We will assume, without loss of generality, that $A_{i}$ are local rings. Then the map $HC_{2} ^{\circ}(A_{1})^{(1)} \to B_{2}^{\circ}(A_{1})$ surjects   onto $\ker(\delta^{\circ})$ \cite{unv1}.  Therefore, the statement reduces to the commutativity of the following diagram:
$$
\xymatrix{
HC_{2} ^{\circ}(A_1)^{(1)} \ar^{\ell i _{2,\tau_1}}[d] \ar[r] &  HC_{2} ^{\circ}(A_2)^{(1)} \ar^{\ell i _{2,\tau_2}}[d]\\
D_{1}(A_{1}) \ar[r] & D_{1}(A_{2}).
}
$$
When the source and target of $\ell i_{2,\tau_i}$ are identified with $S^{3}(I_{i}) ,$ through natural morphisms, this map corresponds to multiplication by 3, by Proposition \ref{reg-on-cyclic}. This proves the commutativity of the diagram and finishes the proof of the lemma. 
\end{proof}

\subsection{Explicit computation of $h_{f}(\tau_1,\tau_2)$}\label{explicit hom} We continue to use   the notation of the previous section. Using the splittings, without loss of generality, we assume that $A_i=\underline{A}_i \oplus I_{i}$ and $B_{i}=S^{\bigcdot}(I_i).$ Recall that $\hat{f}:\hat{B}_{1}\to \hat{B}_{2}$ is a lifting of $f.$ If $a \in \hat{B}_{1}$ is a homogenous element of degree $n,$ then we let $\hat{f}_{i}(a)$ be defined by  $\hat{f}=\sum_{0\leq i} \hat{f}_{i}(a),$ with $\hat{f}_{i}(a)$ homogenous of degree $n+i.$ If $f$ is compatible with the splittings,  we can choose $\hat{f}$ so that it is a morphism of graded algebras and hence $\hat{f}_i=0,$ for $0<i.$ However, we do {\it not} assume that $f$ is compatible with these splittings.   
 \subsubsection{Description of $h_{\hat{f}}(\hat{\tau}_1,\hat{\tau}_2)$}\label{Description of hat h}  We continue with the notation above and explicitly compute $h_{\hat{f}}(\hat{\tau}_1,\hat{\tau}_2)$ on certain specific elements in $F(\Lambda ^2 A_{1} ^{\times})^{\circ}.$

(i) Suppose that we are given $(1+\alpha)\wedge (1+a \alpha) \in F(\Lambda ^2 A_{1} ^{\times})^{\circ},$ with $\alpha \in I_{1}$ and $a \in \underline{A}_{1}.$ In order to compute its image under $h_{f}(\tau_1,\tau_2).$ We first need to lift it to an element in $F(\Lambda^2\hat{B}_{1}^{\times})^{\circ}.$ Because of our choice of the rings $\hat{B}_{i},$ $\alpha \in I_1\subseteq \hat{I}_{1}$ and $a \in \underline{A}_{1} \subseteq \hat{B}_1.$ The element
$$
\tilde{\gamma}:=e^{\alpha} \wedge e^{a\alpha}-\frac{1}{2}e^{a\alpha^2}\wedge a
$$
is a lifting of the above element and has the property that its image under the map $\log d log $ is equal to  
$$
\alpha \cdot d(a\alpha)-\frac{1}{2}a\alpha^2\frac{da}{a}=\frac{1}{2}\alpha^2da+a\alpha d \alpha=\frac{1}{2}d(a\alpha^2)=0 \in (\Omega^{1} _{\hat{B}_{1}}/d(\hat{B}_{1}))^{\circ}.
$$
Therefore we need to apply $h_{\hat{f}}(\hat{\tau}_{1},\hat{\tau}_{2})$ to $\tilde{\gamma}.$  We first compute 
$$
( \log ^{\circ} _{\hat{\tau}_1} \wedge \underline{d} log)(\tilde{\gamma})=\alpha \underline{d}(a\alpha)-a\alpha \underline{d}(\alpha)=0,
$$
 since both $\log ^{\circ} _{\hat{\tau}_{1}}(a)$ and $\underline{d}(a)$ are 0. 
 
 Therefore, $h_{\hat{f}}(\hat{\tau}_{1},\hat{\tau}_{2})(\tilde{\gamma})=-3
 \underline{d}^{-1}( \log ^{\circ} _{\hat{\tau}_2} \wedge \underline{d} log) (\hat{f}(\tilde {\gamma})) \in \hat{J}_{2}/\hat{J}_{2}^2=\hat{I}_{2}^2/\hat{I}_{2}^4 
 .$ The value of the map  $(\log ^{\circ} _{\hat{\tau}_2} \wedge \underline{d} log)\circ \hat{f}$ on the element  $-\frac{1}{2}e^{a\alpha^2}\wedge a$ is equal to the sum of 
 $$
 -\frac{1}{2}(\hat{f}_0(a\alpha^2)\frac{\underline{d}(\hat{f}_1(a))}{\hat{f}_0(a)}-\frac{\hat{f}_1(a)}{\hat{f}_0(a)}\underline{d}(\hat{f}_0(a\alpha^2)))=-\frac{1}{2}(\hat{f}_0(\alpha)^2\underline{d}(\hat{f}_1(a))-\hat{f}_1(a)\underline{d}(\hat{f}_0(\alpha)^2))
 $$
 and an element in $\hat{I}^3 _2 \underline{d}(\hat{I}_2).$
 
 Similarly, the value of the same map on the term $e^{\alpha} \wedge e^{a\alpha}$ is equal to the sum of 
 \begin{align*}
 &(\hat{f}_{0}(\alpha)+ \hat{f}_{1}(\alpha))\underline{d}(\hat{f}_{0}(a\alpha)+ \hat{f}_{1}(a\alpha))-(\hat{f}_{0}(a\alpha)+ \hat{f}_{1}(a\alpha))\underline{d}(\hat{f}_{0}(\alpha)+ \hat{f}_{1}(\alpha))\\
 =&\hat{f}_{0}(\alpha)\underline{d}(\hat{f}_{1}(a)\hat{f}_{0}(\alpha) )-\hat{f}_{1}(a)\hat{f}_{0}(\alpha)\underline{d}(\hat{f}_{0}(\alpha))=\hat{f}_{0}(\alpha)^2\underline{d}(\hat{f}_{1}(a))
 \end{align*}
and an element in $\hat{I}^3 _2 \underline{d}(\hat{I}_2).$ 

Combining these two statements we deduce that 
 $$
 h_{\hat{f}}(\hat{\tau}_{1},\hat{\tau}_{2})(\tilde{\gamma})=-\frac{3}{2}\hat{f}_{0}(\alpha)^2\hat{f}_{1}(a).
 $$

(ii) Suppose that we start with the element 
$$
(1+\frac{\beta}{b-1}) \wedge b-(1+\frac{\beta}{b})\wedge (b-1) \in F(\Lambda ^2 A_{1} ^{\times})^{\circ},
$$
with $\beta \in I_{1}$ and $b \in \underline{A}_{1} ^{\flat} \subseteq A_{1}.$ Analogous to the above case, the element 
$$
\tilde{\delta}:=e^{\beta/(b-1)} \wedge b - e^{\beta/b}\wedge (b-1) \in F(\Lambda ^2 \hat{B}_{1} ^{\times})^{\circ}
$$
lifts  the above element. 
 Since $( \log ^{\circ} _{\hat{\tau}_1} \wedge \underline{d} log)(\tilde{\delta})=0,$ we have $$
 h_{\hat{f}}(\hat{\tau}_{1},\hat{\tau}_{2})(\tilde{\delta})=-3
 \underline{d}^{-1}( \log ^{\circ} _{\hat{\tau}_2} \wedge \underline{d} log) (\hat{f}(\tilde {\delta})) \in \hat{J}_{2}/\hat{J}_{2}^2=\hat{I}_{2}^2/\hat{I}_{2}^4 
 .
 $$
 By an explicit computation, we see that 
 $( \log ^{\circ} _{\hat{\tau}_2} \wedge \underline{d} log) (\hat{f}(\tilde {\delta}))$ is the sum of 
 $$
 \frac{1}{2} \frac{\hat{f}_{1}(b)^2 \underline{d}(\hat{f}_{0}(\beta)) }{(\hat{f}_{0}(b)(\hat{f}_{0}(b)-1))^2}+\frac{\hat{f}_{0}(\beta)\hat{f}_{1}(b) \underline{d}(\hat{f}_{1}(b)) }{(\hat{f}_{0}(b)(\hat{f}_{0}(b)-1))^2}
 $$
 and an element in $\hat{I}^3 _2 \underline{d}(\hat{I}_2).$ This implies that 
  $$
 h_{\hat{f}}(\hat{\tau}_{1},\hat{\tau}_{2})(\tilde{\delta})=-\frac{3}{2}\frac{\hat{f}_{1}(b)^2 \hat{f}_{0}(\beta) }{(\hat{f}_{0}(b)(\hat{f}_{0}(b)-1))^2}.
 $$

\subsubsection{Description of $h_{f}(\tau_1,\tau_2)$} Using the notation above, let us denote $\hat{f}_1$ by $\hat{\theta}.$ We define a map 
$$
h_{\hat{\theta}}: (\Lambda^2 A_{1} ^{\times})^{\circ} \to \hat{I}_2 ^3 / \hat{I}_2 ^4
 $$
 as follows. 
 
For  $\alpha,\, \beta \in I_{1} \subseteq \hat{I}_1,$ note that $f(\alpha),\, f(\beta) \in I_{2} \subseteq \hat{I}_2 ,$ and we define 
$$
h_{\hat{\theta}}((1+\alpha)\wedge (1+\beta)):=f(\alpha) \hat{\theta} (\beta)-\hat{\theta}(\alpha)f(\beta)
.$$ 
For $a \in \underline{A}_1 ^{\flat} \subseteq \hat{B}_{1}$ and $\alpha \in I_{1}\subseteq \hat{I}_{1},$ we let 
$$
h_{\hat{\theta}}((1+\alpha)\wedge \tau_{1}(a)):=-\hat{\theta}(\alpha)\frac{\hat{\theta}(a)}{\underline{f}(a)},$$
where we view $\underline{f}(a) \in \underline{A}_{2} \subseteq \hat{B}_2.$ The additivity in the second component follows from the Leibniz formula $\hat{\theta}(ab)=\underline{f}(a)\hat{\theta}(b)+\hat{\theta}(a)\underline{f}(b)$ in $\hat{B}_{2}/\hat{I}_2 ^2,$ for $a,$ $b \in \underline {A}_{1}.$ 

The following proposition then gives the explicit expression for $h_{f}(\tau_1,\tau_2)$ we are looking for. 
\begin{proposition}\label{equation h}
The map $h_{f}(\tau_1,\tau_2)$ coincides with the restriction of $-\frac{3}{2}h_{\hat{\theta}}$ to $F(\Lambda^2 A_{1} ^{\times})^{\circ}.$ In particular, $h_{\hat{\theta}}$ does not depend on the liftings $\hat{f},$ $\hat{\tau}_1,$ and $\hat{\tau}_2$ on $F(\Lambda^2 A_{1} ^{\times})^{\circ}.$ 
\end{proposition}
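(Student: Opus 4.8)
The plan is to prove the identity by checking it on a convenient spanning set of $F((\Lambda^2 A_1^\times)^\circ)$ and then to deduce the asserted independence from the intrinsic nature of $h_f(\tau_1,\tau_2)$. Both $h_f(\tau_1,\tau_2)$ and the restriction of $-\tfrac32 h_{\hat\theta}$ are $\mathbb{Q}$-linear, so it suffices to verify their agreement on a generating set. Moreover, by its construction through the diagram preceding Proposition \ref{homprop}, the value of $h_f(\tau_1,\tau_2)$ on an element of $F((\Lambda^2 A_1^\times)^\circ)$ is computed by lifting that element to $F((\Lambda^2\hat B_1^\times)^\circ)$ and applying $h_{\hat f}(\hat\tau_1,\hat\tau_2)$, the result being independent of the chosen lift. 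Thus the whole computation reduces to comparing the explicit values of $h_{\hat f}(\hat\tau_1,\hat\tau_2)$ obtained in \textsection\ref{explicit hom} against the defining formulas for $h_{\hat\theta}$.

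First I would pin down the generating set. Using $A_1=\underline A_1\oplus I_1$ and the isomorphism $1+\alpha\mapsto\alpha$ of $1+I_1$ with $I_1$, one has $(\Lambda^2 A_1^\times)^\circ=(\underline A_1^\times\otimes I_1)\oplus\Lambda^2 I_1$, the first summand spanned by the mixed wedges $(1+\alpha)\wedge a$ and the second by the pure wedges $(1+\alpha)\wedge(1+\beta)$. The key observation is that $\log\,\underline{d}log$ carries these summands into complementary parts of $(\Omega^1_{A_1}/dA_1)^\circ$: a mixed wedge goes to $\alpha\,\tfrac{da}{a}$, lying in the $I_1\,d\underline A_1$ part, whereas a pure wedge goes to $\alpha\,d\beta$, lying in the $I_1\,dI_1$ part, and since $I_1^2=0$ these meet $dA_1$ trivially in the relevant degrees. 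Consequently $F$ splits as $(F\cap(\underline A_1^\times\otimes I_1))\oplus(F\cap\Lambda^2 I_1)$. I would then show, by a direct computation with a basis of the free $\underline A_1$-module $I_1$, that $F\cap\Lambda^2 I_1$ is spanned by the type (i) elements $(1+\alpha)\wedge(1+a\alpha)$ and that $F\cap(\underline A_1^\times\otimes I_1)$ is spanned by the type (ii) elements $(1+\tfrac{\beta}{b-1})\wedge b-(1+\tfrac{\beta}{b})\wedge(b-1)$. These are precisely the two families treated in \textsection\ref{explicit hom}.

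With the spanning set in hand, the matching is bookkeeping. For type (i), \textsection\ref{explicit hom}(i) gives $h_{\hat f}(\hat\tau_1,\hat\tau_2)(\tilde\gamma)=-\tfrac32\hat f_0(\alpha)^2\hat f_1(a)=-\tfrac32 f(\alpha)^2\hat\theta(a)$; on the other hand, expanding $h_{\hat\theta}((1+\alpha)\wedge(1+a\alpha))=f(\alpha)\hat\theta(a\alpha)-\hat\theta(\alpha)f(a\alpha)$ and using that $\hat\theta=\hat f_1$ is a derivation over $\hat f_0$, so that $\hat\theta(a\alpha)=\underline f(a)\hat\theta(\alpha)+\hat\theta(a)f(\alpha)$ and $f(a\alpha)=\underline f(a)f(\alpha)$, the two mixed terms cancel and one is left with $f(\alpha)^2\hat\theta(a)$; hence both sides equal $-\tfrac32 f(\alpha)^2\hat\theta(a)$. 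For type (ii) I would run the analogous computation, now using the mixed defining formula for $h_{\hat\theta}$ together with the Leibniz rule for $\hat\theta$ on $\tfrac{\beta}{b-1}$ and $\tfrac{\beta}{b}$; the terms involving $\hat\theta(\beta)$ cancel between the two wedge summands, leaving exactly the value $-\tfrac32\,\hat\theta(b)^2 f(\beta)/(\underline f(b)(\underline f(b)-1))^2$ recorded in \textsection\ref{explicit hom}(ii). This establishes $h_f(\tau_1,\tau_2)=-\tfrac32 h_{\hat\theta}$ on the spanning set, hence on all of $F$.

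Finally, the independence statement is formal. The map $h_f(\tau_1,\tau_2)$ is characterized intrinsically by the homotopy identity (\ref{eqhom}): since $\delta^\circ\colon B_2^\circ(A_1)\to F$ is surjective (we may take $A_1$ local, where $F=\mathrm{im}(\delta^\circ)$) and the left-hand side of (\ref{eqhom}) is built only from the regulators $\ell i_{2,\tau_i}$, which were shown in \textsection\ref{reg} to be independent of all auxiliary presentations, $h_f(\tau_1,\tau_2)$ depends only on $f,\tau_1,\tau_2$ and not on the liftings $\hat f,\hat\tau_1,\hat\tau_2$. Having identified $-\tfrac32 h_{\hat\theta}$ on $F$ with this intrinsic map, its restriction to $F$ is likewise independent of those liftings, which is the final assertion. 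The main obstacle is the generation claim of the second paragraph — showing that the two explicit families already span $F$ — since everything afterward is a direct comparison of formulas; this is exactly where the freeness of $I_1$ over $\underline A_1$ together with $I_1^2=0$ is used essentially, and is precisely the hypothesis that fails in Example \ref{example2}.
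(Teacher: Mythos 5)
Your overall architecture is the same as the paper's: reduce to checking the two explicit values of $h_{\hat{f}}(\hat{\tau}_1,\hat{\tau}_2)$ computed in \textsection\ref{Description of hat h} against the defining formulas for $h_{\hat{\theta}}$ on a spanning set of $F((\Lambda^2 A_1^{\times})^{\circ})$, and your Leibniz-rule bookkeeping for the type (i) and type (ii) families is correct. The gap is precisely the step you flag as the main obstacle: the claim that the type (i) elements span $F\cap\Lambda^2 I_1$ and the type (ii) elements span $F\cap(\underline{A}_1^{\times}\otimes I_1)$ is announced but not carried out. The pure half is manageable (after polarization the span of the $(1+\alpha)\wedge(1+a\alpha)$ is the kernel of $\Lambda^2_{\mathbb{Q}}I_1\to\Lambda^2_{\underline{A}_1}I_1$), but the mixed half amounts to showing that the elements $b\otimes\tfrac{\beta}{b-1}-(b-1)\otimes\tfrac{\beta}{b}$ span the kernel of $a\otimes\alpha\mapsto\alpha\, d\!\log a$ from $\underline{A}_1^{\times}\otimes I_1$ to $I_1\otimes_{\underline{A}_1}\Omega^1_{\underline{A}_1}$ --- in effect a symbol presentation of $\Omega^1_{\underline{A}_1}\otimes I_1$ --- and you give no argument for this. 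You would also need to justify that your direct-sum decomposition of $F$ survives passage to the quotient $(\Omega^1_{A_1}/dA_1)^{\circ}$, where $dI_1$ mixes the summands $I_1\,d\underline{A}_1$ and $\underline{A}_1\,dI_1$; this does hold when $I_1$ is free, but it is not immediate.

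The paper sidesteps all of this. Since $F((\Lambda^2 A_1^{\times})^{\circ})={\rm im}(\delta^{\circ})$ and the splitting $\tau_1$ shows that $B_2^{\circ}(A_1)$ is generated by the elements $[a+\alpha]-[a]$ with $a\in\underline{A}_1^{\flat}$ and $\alpha\in I_1$, generation follows from the single identity
$$
\delta^{\circ}([a+\alpha]-[a])=\Bigl((1+\tfrac{\alpha}{a-1})\wedge a-(1+\tfrac{\alpha}{a})\wedge (a-1)\Bigr)+(1+\tfrac{\alpha}{a-1})\wedge(1+\tfrac{\alpha}{a}),
$$
whose first parenthesized piece is a type (ii) element (with $\beta=\alpha$, $b=a$) and whose last term is a type (i) element (with $\alpha$ replaced by $\tfrac{\alpha}{a-1}$ and $a$ by $\tfrac{a-1}{a}$). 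Replacing your second paragraph by this one-line computation closes the gap without any decomposition of $F$ or choice of basis of $I_1$; your final paragraph on independence and your formula verifications can then stand as written.
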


\begin{proof}
By the definition of $h_{f}(\tau_1,\tau_2),$ we only need to show that $h_{\hat{f}}(\hat{\tau}_{1},\hat{\tau}_{2})$ coincides with $-\frac{3}{2}h_{\hat{\theta}}$ on $F(\Lambda^2 A_{1} ^{\times})^{\circ}.$ By the formulas for $h_{\hat{f}}(\hat{\tau}_{1},\hat{\tau}_{2})$ in \textsection \ref{Description of hat h} above, we see that these two functions agree on elements of the form $(1+\alpha)\wedge (1+a\alpha)$ and $(1+\frac{\beta}{b-1})\wedge b -(1+\frac{\beta}{b})\wedge (b-1).$ In order to finish the proof, we only need to prove that these elements generate   $F(\Lambda^2 A_{1} ^{\times})^{\circ}.$  Since $F(\Lambda^2 A_{1} ^{\times})^{\circ}={\rm im} (\delta^{\circ})$ and 
$$
\delta^{\circ}(\{ a+\alpha \}_{2}-\{ a)=(a-1)\wedge (1+\frac{\alpha}{a})-a\wedge (1+\frac{\alpha}{a-1})+(1+\frac{\alpha}{a-1})\wedge (1+\frac{\alpha}{a}),
$$
these elements indeed generate $F(\Lambda^2 A_{1} ^{\times})^{\circ}.$
\end{proof}

\section{Regulator map and the Proof of the  Theorem} 
In this section, we continue to assume that $\underline{X}$ is smooth over $k$ and that the conormal sheaf of the imbedding  $\underline{X}\hookrightarrow X$ is locally free on $\underline{X}.$ 
Let $\{U_i \}_{i\in I}$ be an open affine cover of $X.$ Suppose that $\tau_i$ are local splittings of $\underline{U}_i \hookrightarrow U_i.$ Let $\{ a_{ij} \}_{i,j \in I} $ be local sections of $\underline{B}_2 ^{\circ}$ on $U_{ij}$ and $\{ b_{i} \}_{i \in I}$ be local sections of $F(\Lambda^{2} \mathcal{O}_{X} ^{\times})^{\circ}$  on $U_{i}$ such that $\delta (a_{ij})=b_{j}|_{U_{ij}}-b_{i}|_{U_{ij}},$ and $a_{jk}|_{U_{ijk}}-a_{ik}|_{U_{ijk}}+a_{ij}|_{U_{ijk}}=0.$ 

Consider the element 
$$
\gamma_{ij}:=\ell i_{2,\tau_i}(a_{ij})+h(\tau_i,\tau_j)(b_j) \in \underline{D}_1(U_{ij}).
$$

Since on $U_{ijk},$ $a_{jk}-a_{ik}+a_{ij}=0,$ and $h(\tau_j,\tau_k)-h(\tau_i,\tau_k)=h(\tau_j,\tau_i)$ , we have the following equalities,   $\gamma_{jk}-\gamma_{ik}+\gamma_{ij}=$
\begin{align*}
&  \ell i_{2,\tau_j}(a_{jk}) -\ell i_{2,\tau_i}(a_{ik})+\ell i_{2,\tau_i}(a_{ij})+h(\tau_j,\tau_k)(b_k)-h(\tau_i,\tau_k)(b_k)+h(\tau_i,\tau_j)(b_j)\\
 =&  \ell i_{2,\tau_j}(a_{jk})- \ell i_{2,\tau_i}(a_{jk})+h(\tau_j,\tau_i)(b_k)+h(\tau_i,\tau_j)(b_j)\\
 =&h(\tau_i,\tau_j)(\delta(a_{jk}))+h(\tau_j,\tau_i)(b_k)+h(\tau_i,\tau_j)(b_j)=h(\tau_i,\tau_j)(b_k-b_j)+h(\tau_j,\tau_i)(b_k)+h(\tau_i,\tau_j)(b_j)\\
 =&0.
\end{align*}
Therefore $\{\gamma_{ij} \}_{i, j \in I}$ defines a cocycle. If $\{\tau_i ' \}_{i \in I}$ is another set of splittings and $\{\gamma_{ij} ' \}_{i, j \in I}$ the corresponding cocycle, then we have $\gamma_{ij}' -\gamma_{ij}=$
\begin{align*}
  & \ell i_{2,\tau_i'}(a_{ij})+h(\tau_i',\tau_j')(b_j)- \ell i_{2,\tau_i}(a_{ij})-h(\tau_i,\tau_j)(b_j)\\
   =&h(\tau_i,\tau_i')(\delta(a_{ij}))+h(\tau_i',\tau_j')(b_j)-h(\tau_i,\tau_j)(b_j)\\
   =&h(\tau_i,\tau_i')(b_j-b_i)+h(\tau_i',\tau_j')(b_j)-h(\tau_i,\tau_j)(b_j)\\
   =&h(\tau_j,\tau_j')(b_j)-h(\tau_i,\tau_i')(b_i),
\end{align*}
which is the co-boundary of the element $\{ h(\tau_i,\tau_i')(b_i)\}_{i \in I}.$ 
In order to finish the regulator construction we need to show that the boundaries go to boundaries. Therefore if $\{ a_{i}\}_{i\in I}$ is a collection of sections of $\underline{B}_{2}$ over $U_{i},$ the map sends the boundary of this element to 
$$
\ell i_{2,\tau_i}(a_j-a_i)+h(\tau_i,\tau_j)(\delta(a_j))=\ell i_{2,\tau_i}(a_j) -\ell i_{2,\tau_i}(a_i)+\ell i_{2,\tau_j}(a_j)-\ell i_{2,\tau_i}(a_j)
$$
on $U_{ij},$ which is the boundary of $\{\ell i _{2,\tau_i}(a_i) \}_{i \in I}.$ 

This defines the map 
$$
\rho_2: {\rm H}^2(X,F\Gamma_{X}^{\circ}(2)) \to {\rm H}^1(X,D_{1}(\mathcal{O}_X)),
$$
we were looking for. The map $\rho_1,$ immediately follows from the surjective map of complexes 
$$
\Gamma_{X} ^{\circ}(2) \to K_{2} ^{M}(\mathcal{O}_{X})^{\circ}_{\mathbb{Q}}[-2],
$$
together 
with the identification $K_{2} ^{M}(\mathcal{O}_{X})^{\circ}_{\mathbb{Q}}=(\Omega^1 _{X}/d \mathcal{O}_{X})^{\circ}.$ 

{\it Proof of Theorem \ref{thm1}.}
Since the  map of complexes ${\rm ker} (\delta^{\circ})[-1] \to F\Gamma_{X}^{\circ}(2)$ is a quasi-isomorphism, we have an isomorphism
${\rm H}^{1}(X,{\rm ker}(\delta^{\circ}))  \xrightarrow{\sim} {\rm H}^2(X,F\Gamma_{X}^{\circ}(2)).$  By Corollary \ref{localiso},  for any choice $\tau$ of a local splitting of $\underline{U} \hookrightarrow U,$  there is an isomorphism 
\begin{eqnarray}\label{localmap}
\ell i_{2,\tau}: {\rm ker}(\delta^{\circ})|_{U} \to D_1(\mathcal{O}_{X})|_{U}. 
\end{eqnarray}
If $\tau'$ is a different splitting $\ell i_{2,\tau '}=\ell i_{2,\tau}$ on ${\rm ker}(\delta ^{\circ})$ by (\ref{eqhom}). Therefore, the local isomorphism  (\ref{localmap}) is independent of the choice of the splitting and gives a global isomorphism ${\rm ker}(\delta^{\circ})\to D_{1}(\mathcal{O}_X).$ This proves that $\rho_2$ is an isomorphism. 

Suppose that we have a map $f:X_{2}\to X_{1}$ of $k$-schemes. The functoriality of $\rho_1$ with respect to $f$ 
is clear, whereas that of $\rho_2$ can be deduced easily by using the homotopy maps $h_{f}(\tau_1.\tau_2),$ for choices of, not necessarily compatible, splittings on $X_{1}$ and $X_2.$   
\hfill $\Box$

\section{Crystalline Deligne-Vologodsky complex}\label{vol} 

There is another, somewhat more transcendental, complex of Zariski sheaves  which is expected to compute the motivic cohomology of the infinitesimal part of the motivic cohomology of  $X.$ This is the crystalline version of the Deligne complex as defined by Vologodsky \cite{vol}. In this section, we assume that $X/k_2$ is smooth. 

Let $\mathcal{J}_{X}$ denote the subsheaf of the crystalline structure sheaf $\mathcal{O}_X$ which associates to  an infinitesimal thickening $U \hookrightarrow T$ of an open subset  $U$ of $X,$ the kernel of the map $\mathcal{O}(T) \to \mathcal{O}(U).$ Let $\pi$ denote the natural map from the crystalline site to the Zariski site over $X.$ For $1\leq i,$ let  
$$
\underline{\mathcal{D}}_{X} ^{\circ}(i):={\rm Cone} ({\rm R}\pi (\mathcal{O}_{X}/\mathcal{J}_X ^i) \to {\rm R}\pi (\mathcal{O}_{\underline{X}}/\mathcal{J}_{\underline{X}} ^i) )[-2]
$$
denote the complex defined by Vologodsky \cite[\textsection 7]{vol}.  The cohomology groups
$$
{\rm H}^{*}_{crys} (X,\mathbb{Q}_{D}(i))^{\circ}:={\rm H}^{*}(X,\underline{\mathcal{D}}_{X} ^{\circ}(i)) 
$$
of this complex are the crystalline analog of the Deligne cohomology groups. Using Goodwillie's theorem \cite{hess}, Vologodsky proves that the Chern character gives  an abstract isomorphism $K_{m} ^{\circ}(X)_{\mathbb{Q}} ^{(i)} \simeq {\rm H}^{2i-m}(X,\underline{\mathcal{D}}_{X} ^{\circ}(i)).$ 

Note that  $\underline{\mathcal{D}}_{X} ^{\circ}(1)=\mathcal{O}_{X} ^{\circ}[-1],$ the sheaf $\mathcal{O}_{X} ^{\circ}$ in degree 1. We will only be concerned with this complex when $i=2.$ When there is a imbedding of $X$ into a smooth scheme $P$  such that the sheaf of ideals of $X$ in $P$ is $J$ then $\underline{\mathcal{D}}_{X} ^{\circ}(2)$ is quasi-isomorphic to the complex 
$$
\mathcal{O}_{\hat{P}} ^{\circ} /J^2 \to (\Omega^{1} _{\hat{P}}) ^{\circ}/J\cdot \Omega^{1} _{\hat{P}}
$$
concentrated in degrees 1 and 2, where $\Omega_{\hat{P}} ^{i\circ}$ is the kernel of the map $\Omega_{\hat{P}} ^{i} \to \Omega_{\underline{X}} ^{i}, $ and $\hat{P}$ is the completion of $P$ along $X.$

Let $\mathcal{H}^{*}(\underline{\mathcal{D}}_{X} ^{\circ}(i))$ denote the Zariski sheaves obtained by taking the cohomology sheaves of the complex $\underline{\mathcal{D}}_{X} ^{\circ}(i).$ By choosing local splittings, one sees that $\mathcal{H}^1(\underline{\mathcal{D}}_{X} ^{\circ}(2))=D_{1}(\mathcal{O}_X)$ and $\mathcal{H}^2(\underline{\mathcal{D}}_{X} ^{\circ}(2))=(\Omega^1 _{X}/d \mathcal{O}_{X})^{\circ}.$ Therefore, Vologodsky's theorem also gives maps from $K_{2}(X)^{(2)} _{\mathbb{Q}}$ similar to the one that we constructed above. We do not currently know how to compare these maps, but will consider this problem in a future work.




\end{document}